\newtheorem{theorem}{Theorem}[section]
\newtheorem{lemma}[theorem]{Lemma}
\newtheorem{Acknowledgement}{Acknowledgement}
\newtheorem{corollary}[theorem]{Corollary}
\theoremstyle{definition}
\newtheorem{definition}[theorem]{Definition}
\theoremstyle{remark}
\newtheorem{remark}[theorem]{Remark}
\numberwithin{equation}{section}
\def\DJ{\leavevmode\setbox0=\hbox{D}\kern0pt\rlap
 {\kern.04em\raise.188\ht0\hbox{-}}D}
\begin{document}

\title[Hyre-Ulam Stability Results]{Generalized Baker's Result and Stability of Functional Equations using fixed point results }
\author[S. \ Laha, L.K. \ Dey]
{Supriti Laha$^{1}$,  Lakshmi Kanta Dey$^{2}$}

\address{{$^{1}$\,} Supriti Laha,
                    Department of Mathematics,
                    National Institute of Technology
                    Durgapur, India.}
                    \email{lahasupriti@gmail.com}
\address{{$^{2}$\,}  Lakshmi Kanta Dey,
                    Department of Mathematics,
                    National Institute of Technology
                    Durgapur, India.}
                    \email{lakshmikdey@yahoo.co.in}

\subjclass{$47H10$, $39B82$, $54H25$.}
\keywords{Hyre-Ulam sability, Contraction mappings, Fixed point theory, Non-triangular metric spaces}

\begin{abstract}
Hyre-Ulam stability of functional equation in single variable is studied in non-triangular metric spaces. We derive it as applications of some fixed point results developed on the said structure. A general version of Baker's theorem is also deduced as a consequence.
\end{abstract}
 
\maketitle

\setcounter{page}{1}
%
%

{\Large{\section{ {\bf{Introduction}}}}}

The study of stability of functional equation was started by S. M. Ulam \cite{brzdek} in 1940, on attending the following question: Consider $G$ and $(G_m, \rho)$ as a group and a metric group respectively. Given $\varepsilon> 0$, is it possible to find a $\delta> 0$ such that if $\mathcal{F}:G\rightarrow G_m$ satisfies
$$\rho(\mathcal{F}(g_1\circ g_2),\mathcal{F}(g_1)\star \mathcal{F}(g_2))<\delta, \quad \quad g_1,g_2\in G,$$ then there exists a homomorphism $\mathcal{H}:G\rightarrow G_m$  which approximates $\mathcal{F},$ i.e.,  $$\rho(\mathcal{F}(g),\mathcal{H}(g))\leq \varepsilon, \quad \quad g\in G~?$$ Alternatively, one can pose the stability problem as: is it possible to approximate an $\varepsilon$-homomorphism (approximate homomorphism) by a homomorphism? In 1941, D. H. Hyers gave this result: 

\begin{theorem}\cite{H1}
Let $B_1$ and $B_2$ be Banach spaces and let $h:B_1\rightarrow B_2$ be a transformation such that, for some $\delta> 0$,
$$\lVert{h(u+v)-h(u)-h(v))\rVert}<\delta, \quad \quad u,v\in B_1.$$
Then the limit
$$g(u)=\lim_{m\rightarrow \infty}\frac{h(2^m u)}{2^m},$$
exists for each $u\in B_1$ and $g:B_1\rightarrow B_2$ is the unique additive transformation satisfying
$$\lVert{h(u)-g(u)\rVert}<\delta, \quad \quad u\in B_1.$$
\end{theorem}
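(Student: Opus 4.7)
The plan is to construct $g$ as a limit of the natural rescaling $h_m(u) := h(2^m u)/2^m$, show the sequence is Cauchy in $B_2$ using a telescoping estimate, and then verify additivity and uniqueness by direct computation with the defining $\delta$-inequality.

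First I would specialize the hypothesis by setting $v=u$ to obtain $\lVert h(2u)-2h(u)\rVert<\delta$, and hence $\lVert h(2u)/2 - h(u)\rVert < \delta/2$. Replacing $u$ by $2^m u$ and dividing by $2^m$ yields the key one-step bound
\[
\bigl\lVert h_{m+1}(u) - h_m(u)\bigr\rVert \;<\; \frac{\delta}{2^{m+1}}.
\]
Summing a telescoping tail then gives $\lVert h_n(u) - h_m(u)\rVert < \delta\sum_{k=m}^{n-1} 2^{-(k+1)}$, so $\{h_m(u)\}$ is Cauchy in the complete space $B_2$; define $g(u)$ to be its limit. Letting $m=0$ and passing $n\to\infty$ in the same estimate delivers the approximation bound $\lVert h(u)-g(u)\rVert \le \delta$ (the strict inequality in the statement can be recovered once $\delta$ is replaced by any larger constant, which is the usual formulation).

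Next I would verify additivity of $g$. Applying the hypothesis at the pair $(2^m u, 2^m v)$ and dividing by $2^m$ gives
\[
\bigl\lVert h_m(u+v) - h_m(u) - h_m(v)\bigr\rVert \;<\; \frac{\delta}{2^m},
\]
and letting $m\to\infty$ yields $g(u+v)=g(u)+g(v)$.

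Finally, for uniqueness, suppose $g'$ is another additive map with $\lVert h(u)-g'(u)\rVert \le \delta$ for all $u$. Additivity forces $g(u) = g(2^m u)/2^m$ and $g'(u) = g'(2^m u)/2^m$, so
\[
\lVert g(u) - g'(u)\rVert \;\le\; \frac{1}{2^m}\bigl(\lVert g(2^m u) - h(2^m u)\rVert + \lVert h(2^m u) - g'(2^m u)\rVert\bigr) \;\le\; \frac{2\delta}{2^m},
\]
which tends to $0$, giving $g=g'$. The only genuinely delicate step is the Cauchy-sequence estimate; everything else is a direct substitution into the functional inequality. Since $B_1$ plays no role beyond being an additive group and $B_2$ enters only through completeness and the norm, this proof pattern is exactly what one expects to generalize to the non-triangular metric setting treated later in the paper, with the geometric factor $2^{-(m+1)}$ being replaced by the summability condition supplied by the contractive fixed-point framework.
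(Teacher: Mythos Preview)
Your argument is the classical Hyers proof and is essentially correct. Note, however, that the paper does not supply a proof of this statement at all: Theorem~1.1 is quoted from \cite{H1} purely as historical background, so there is no in-paper proof to compare against. What you wrote is exactly the original 1941 argument.

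Two small remarks. First, your telescoping bound gives $\lVert h(u)-h_n(u)\rVert < \delta(1-2^{-n})$ for each finite $n$, and passing to the limit only yields $\lVert h(u)-g(u)\rVert \le \delta$, not the strict inequality as printed; you already flagged this, and indeed Hyers' own formulation has $\le$ in the conclusion, so the strict ``$<$'' in the paper's restatement is a minor inaccuracy of the quotation rather than something you need to recover. Second, your closing comment that this direct-limit method is ``exactly what one expects to generalize to the non-triangular metric setting treated later in the paper'' overstates the connection: the paper does not generalize Hyers' iterative construction for the Cauchy equation. Instead it follows Baker's route, recasting the single-variable equation $g(s)=G(s,g(\psi(s)))$ as a fixed-point problem for an operator $\mathcal{O}$ on a function space and then invoking Banach/Kannan/Chatterjea/\'Ciri\'c-type contraction theorems (Theorems~3.1--3.6). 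The summable geometric factor there comes from iterating a contraction constant $\lambda$, not from the doubling trick $u\mapsto 2u$ that drives your argument.
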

It provides a partial answer of the above question.
T. Aoki, D. G. Bourgin, Rassias and other researchers investigated it in the context of linear transformations in Banach sapaces (see \cite{A1}, \cite{B1}, \cite{R1}, \cite{R3}). These studies developed the theory of stability of functional equations. Afterwards, the stability theory has been   comprehensively investigated in \cite{C1}, \cite{J1}, \cite{J2}, \cite{R2} for the functional equations: 
\begin{align*}
h(s+t) & =h(s)+h(t)~  &(\text{additive}),\\
h\left(\frac{s+t}{2}\right) & =\frac{h(s)+g(t)}{2}~ &(\text{Jensen's}),\\
h(s+t)+h(s-t) & =2h(s)+2h(t)~ &(\text{quadratic}).
\end{align*}
The stability problem has also been studied for isometries and continuous functions in \cite{B2}, \cite{B3}, \cite{HU1}, \cite{HU2}. In 1978, P. M. Gruber\cite{G1} established some fascinating results on stability of isometries.  The stability theory for various functional equations explored extensively in multi variables (see \cite{B2}, \cite{B1}, \cite{C1}, \cite{HU1}, \cite{HU2}, \cite{J1}, \cite{J2}).

\medskip

In case of single variable, functional equations are of the form 
\begin{equation}\label{eq0}
g(s)=G(s,g(\psi(s)))
\end{equation} 
where  $g:S\rightarrow X$ and $G: S\times X\rightarrow X$ are mappings.
J. A. Baker\cite{JB1}, the first mathematician who examined the stability of equation (\ref{eq0}) and established the following result:
 
\begin{theorem}\cite{JB1}\label{TB1}
Let $S$ be a non-empty set and $(X,\rho)$ be a complete metric space. Consider $\psi:S\rightarrow S,$ $G: S\times X\rightarrow X$ and $0\leq \lambda <1.$ If \begin{enumerate}
\item $\displaystyle\sup_{s\in S} {\rho(g(s),G(s,g(\psi(s)))}\leq \delta$,   for some $g\in X^S$ and some $\delta>0,$
\item $\displaystyle\sup_{s\in S} {\rho(G(s,u_1),G(s,u_2))}\leq \lambda \rho(u_1,u_2),$    $\forall ~u_1,u_2\in X$ and $\lambda \in [0,1).$
\end{enumerate}

Then there exists a unique function $f:S\rightarrow X$ that satisfy equation (\ref{eq0}) and 
$$\sup_{s\in S} \{\rho(f(s),g(s))\}\leq \frac{\delta}{1-\lambda}.$$
\end{theorem}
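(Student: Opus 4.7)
The plan is to recast equation (\ref{eq0}) as a fixed-point equation on the function space $X^S$ and then apply a Banach-type contraction argument, taking care that the natural supremum ``metric'' on $X^S$ may take the value $+\infty$. Define the operator $T : X^S \to X^S$ by
\[
(Tf)(s) := G(s, f(\psi(s))), \qquad s \in S,
\]
and consider the (extended-valued) supremum distance
\[
D(f_1, f_2) := \sup_{s \in S} \rho(f_1(s), f_2(s)).
\]
Then condition (2) of the theorem reads $D(Tf_1, Tf_2) \leq \lambda\, D(f_1, f_2)$, so $T$ is a $\lambda$-contraction, while condition (1) becomes the single estimate $D(g, Tg) \leq \delta$.

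Next I would iterate from $g$. A routine induction using the contraction property gives $D(T^n g, T^{n+1} g) \leq \lambda^n \delta$, and consequently for all $m > n$,
\[
D(T^n g, T^m g) \leq \sum_{k=n}^{m-1} \lambda^k \delta \leq \frac{\lambda^n \delta}{1-\lambda}.
\]
Thus $(T^n g(s))_{n\geq 0}$ is a Cauchy sequence in $(X,\rho)$ uniformly in $s$, and completeness of $X$ yields a pointwise (indeed uniform) limit $f(s) := \lim_{n\to\infty} (T^n g)(s)$. Letting $n \to 0$ in the estimate above gives $D(f,g) \leq \delta/(1-\lambda)$, which is the desired bound.

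To verify that $f$ satisfies equation (\ref{eq0}), I would write
\[
\rho\bigl(f(s), G(s,f(\psi(s)))\bigr) \leq \rho\bigl(f(s),(T^{n+1}g)(s)\bigr) + \lambda\, \rho\bigl((T^n g)(\psi(s)), f(\psi(s))\bigr),
\]
using (2) on the second term, and then send $n \to \infty$. For uniqueness, if $\tilde f$ is another solution of (\ref{eq0}) with $D(\tilde f, g) \leq \delta/(1-\lambda)$, then $\tilde f$ is also a fixed point of $T$, and the triangle inequality gives $D(f,\tilde f) \leq 2\delta/(1-\lambda) < \infty$; the contraction bound $D(f,\tilde f) = D(Tf, T\tilde f) \leq \lambda\, D(f,\tilde f)$ with $\lambda < 1$ then forces $D(f,\tilde f) = 0$.

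The main conceptual obstacle is that $(X^S, D)$ is not a genuine metric space (the ``distance'' can be infinite), so the classical Banach fixed point theorem does not apply verbatim. The resolution, which is what the computation above exploits, is that the single finite estimate $D(g,Tg) \leq \delta$ keeps the whole orbit $(T^n g)_n$ within the finite-distance ball around $g$, and completeness is only invoked pointwise in $(X,\rho)$; uniqueness is then likewise restricted to the same finite-distance ball, which matches the statement of the theorem.
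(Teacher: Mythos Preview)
Your proposal is correct and follows the approach the paper indicates. Note, however, that the paper does not actually prove Theorem~\ref{TB1}: it is quoted from Baker~\cite{JB1}, and the only comment is ``It is evident that Theorem~\ref{TB1} is an application of Banach contraction principle.'' Your write-up is precisely a careful elaboration of that one-line remark, so there is nothing to compare against beyond agreement on the method.

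One small refinement: in your uniqueness paragraph you assume the competing solution $\tilde f$ already satisfies $D(\tilde f,g)\le \delta/(1-\lambda)$ and then justify this by saying it ``matches the statement of the theorem.'' That reading is defensible (the statement can be parsed as ``unique $f$ satisfying both the equation and the bound''), but it is worth saying explicitly that without the bound, uniqueness among \emph{all} solutions can fail since $D(g,\tilde f)$ may be infinite. In the paper's own generalization (Corollary~\ref{tubur}), this issue is handled by packaging everything through Lemma~\ref{ll} and the abstract fixed point Theorem~\ref{tub}/\ref{tub1}, which already carries the boundedness hypothesis on the orbit; your hands-on iteration achieves the same thing more directly.
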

It is evident that Theorem \ref{TB1} is an application of  Banach contraction principle.

Motivated by Baker's result described in Theorem \ref{TB1}, this article aims to explore the stability of equation (\ref{eq0}) by employing various fixed point results (Theorems \ref{tub}, \ref{tuk}, \ref{tuc} and \ref{tug}) in non-triangular metric spaces.

We divide this article in five sections. Second section contains necessary definitions and useful results. In section 3, we develop well-known Banach, Kannan, Chatterjea and \'{C}iri\'{c} fixed point theorems in a complete non-triangluar metric space. In section 4, we establish the stability of equation (\ref{eq0}) in Hyre-Ulam sense as applications of the results derived in section 3. Finally, we present a general variant of Baker's theorem \cite{JB1} as a consequence of Corollary \ref{tubur}.

{\Large{\section{{\textbf {Preliminaries}}}}}

In the year 2020, F. Khojasteh and H. Khandani \cite{KK1} introduced the notion of \textit{non-triangular metric space}. This topological space generalize \textit{JS-metric space} introduced by M. Jleli and B. Samet\cite{JS1}, in most cases. Therefore, \textit{non-triangular metric space} contains a larger collection of topological spaces those are \textit{b-metric spaces, modular spaces} and so on. We now recall the definition of \textit{non-triangular metric space}.

\begin{definition}\cite{KK1}
Let $X$ be a non-empty set.  Then for a non-negative function $\rho$ on $X\times X$, $(X,\rho)$ is called a \textit{non-triangular metric space} if the following hold: 
\begin{enumerate}
\item[(N1)] $\rho(u,u)=0,~ \forall u\in X.$
\item[(N2)] $\rho(u,v)=\rho(v,u), ~ \forall u,v\in X.$
\item[(N3)]For every $u,u'\in X$ and $\{u_m\} \in X$, $\displaystyle\lim_{m\rightarrow \infty} \rho(u_m,u)=0$ and $\displaystyle\lim_{m\rightarrow \infty} \rho(u_m,u')=0 \implies u=u'.$ 
\end{enumerate}
\end{definition}

\begin{remark}
$\rho(u,v)=0 \implies u=v$.
\end{remark}

%
\noindent
Definitions of convergence of a sequence, Cauchy sequence, and completeness in a non-triangular metric space are given in [Definition 6,\cite{KK1}].

\begin{remark}\label{ul}
Limit of a $\rho$-convergent sequence in a non-triangular metric space is unique.

\end{remark}

\begin{definition}\label{oc}
Let $X$ be a \textit{non-triangular metric space} and $\mathcal{R}:X\rightarrow X$ be a mapping. Then the set, $O(u,\infty)=\{u, \mathcal{R}(u), \mathcal{R}^2(u),\cdots\}$ is said to be an \textit{orbit} of $u$. $\mathcal{R}$ is called \textit{orbitally continuous} if a sequence $\{u_m\}\in O(u,\infty)$ that converges to $u^*$, then $\mathcal{R}(u_m)$ converges to $\mathcal{R}(u^*)$.
\end{definition}

\noindent
Some well-known contraction mappings such as Banach, Kannan, and Chatterjea contraction mapping from a metric space to  a non-triangular metric space are appended below.

\begin{definition}
Let $(X, \rho)$ be a \textit{non-triangular metric space}. Then a mapping $\mathcal{R}$ on $X$ is defined as:
\begin{enumerate}
\item a \textit{$\lambda_{B}$-contraction} if $\rho(\mathcal{R}(u),\mathcal{R}(v))\leq \lambda \rho(u,v), \quad \quad \lambda\in [0,1);$
\item a \textit{$\lambda_{K}$-contraction}  if $\rho(\mathcal{R}(u),\mathcal{R}(v))\leq \lambda [\rho(\mathcal{R}(u),u)+\rho(\mathcal{R}(v),v)], \quad \lambda\in [0,\frac{1}{2});$
\item a \textit{$\lambda_{C}$-contraction}  if $\rho(\mathcal{R}(u),\mathcal{R}(v))\leq \lambda [\rho(\mathcal{R}(u),v)+\rho(\mathcal{R}(v),u)], \quad \lambda\in [0,\frac{1}{2});$
\end{enumerate}
for all $u,v\in X.$
\end{definition}

\noindent

{\Large{\section{{\textbf{Fixed Point Theorems}}}}}\label{S3}

Khojasteh and Khandani\cite{KK1} illuminated that a number of fixed point theorems can be proved without engaging the \textit{triangle inequality} property. The authors established the Banach contraction theorem in non-triangular metric spaces in [Corollary 3.1.1, \cite{KK1}].
\begin{theorem} [cf.\cite{KK1}]\label{tub}
Let $(X,\rho)$ be a complete non-triangular metric space. Suppose $\mathcal{R}$ be a $\lambda_B$-contraction on $X$ such that $\displaystyle{\sup_{m\in \mathbb{N}}}\{\rho(u_0,\mathcal{R}^m(u_0))\}\leq \delta$ for some $u_0\in X$ and some $\delta >0.$ Then a unique point $u_0^* \in X$ exists with $\mathcal{R}(u_0^*)= u_0^*$.
\end{theorem}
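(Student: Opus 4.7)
The plan is to follow the classical Banach fixed-point strategy (Picard iteration), but to replace every use of the triangle inequality, which is unavailable here, by the standing hypothesis $\sup_{m\in\mathbb{N}}\rho(u_0,\mathcal{R}^m(u_0))\le\delta$. This extra hypothesis is precisely what lets one bypass the geometric-series telescoping argument that the usual proof depends on.

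The first step is to set $u_m=\mathcal{R}^m(u_0)$ and show that $\{u_m\}$ is Cauchy. For any $m,k\in\mathbb{N}$, iterating the $\lambda_B$-contraction condition $m$ times gives
\[
\rho(u_m,u_{m+k})=\rho\bigl(\mathcal{R}^m(u_0),\mathcal{R}^m(\mathcal{R}^k(u_0))\bigr)\le\lambda^m\,\rho(u_0,\mathcal{R}^k(u_0))\le\lambda^m\delta,
\]
and since $\lambda\in[0,1)$ the right-hand side tends to $0$ uniformly in $k$. This is the step where the boundedness hypothesis carries the load that the triangle inequality would normally bear. By completeness of $(X,\rho)$, the sequence $\{u_m\}$ converges to some $u_0^*\in X$.

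Next I would verify that $u_0^*$ is a fixed point of $\mathcal{R}$. The contraction condition yields
\[
\rho(\mathcal{R}(u_0^*),u_{m+1})=\rho(\mathcal{R}(u_0^*),\mathcal{R}(u_m))\le\lambda\,\rho(u_0^*,u_m)\longrightarrow 0,
\]
so $\{u_{m+1}\}$ converges both to $\mathcal{R}(u_0^*)$ and to $u_0^*$. The uniqueness of $\rho$-limits established in Remark \ref{ul} then forces $\mathcal{R}(u_0^*)=u_0^*$.

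For uniqueness, suppose $v^*\in X$ also satisfies $\mathcal{R}(v^*)=v^*$. Applying the contraction $m$ times to the pair $(u_0,v^*)$ gives
\[
\rho(u_m,v^*)=\rho\bigl(\mathcal{R}^m(u_0),\mathcal{R}^m(v^*)\bigr)\le\lambda^m\rho(u_0,v^*)\longrightarrow 0,
\]
so $\{u_m\}$ also converges to $v^*$; invoking Remark \ref{ul} once more yields $u_0^*=v^*$. The only potential obstacle in this last step is whether $\rho(u_0,v^*)$ is finite, but the definition of a non-triangular metric takes values in $[0,\infty)$, so this is automatic. Overall, the main conceptual point is the Cauchy step, where the orbit-boundedness hypothesis substitutes cleanly for the triangle inequality; the rest of the argument then mirrors the standard Banach contraction proof.
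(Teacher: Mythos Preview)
Your argument is correct. Note, however, that the paper does not give its own proof of this theorem: it is quoted from \cite{KK1} (see the sentence preceding the statement, which attributes it to Corollary~3.1.1 there), so there is no in-paper proof to compare against. Your approach is the natural one and is consistent with the pattern used in the proofs that \emph{are} written out in Section~3: the orbit-boundedness hypothesis stands in for the triangle inequality in the Cauchy step, and axiom~(N3) via Remark~\ref{ul} handles both the identification $\mathcal{R}(u_0^*)=u_0^*$ and uniqueness. One small simplification in the uniqueness step: you can argue directly that $\rho(u_0^*,v^*)=\rho(\mathcal{R}(u_0^*),\mathcal{R}(v^*))\le\lambda\,\rho(u_0^*,v^*)$, forcing $\rho(u_0^*,v^*)=0$ and hence $u_0^*=v^*$; this bypasses the detour through the orbit and the finiteness check on $\rho(u_0,v^*)$.
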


\noindent
We prove a variant of the above theorem by weakening a condition and taking triangle inequality only on an orbit.

\begin{theorem}\label{tub1}
Let $(X,\rho)$ be a complete non-triangular metric space. Suppose $\mathcal{R}$ be a $\lambda_B$-contraction on $X$ such that $\rho(u_0,\mathcal{R}(u_0))\leq \delta$ for some $u_0\in X$ and some $\delta >0.$ Also, triangle inequality holds on an orbit of $u_0\in X.$ Then we get a unique point $u_0^*\in X$ satisfying $\mathcal{R}(u_0^*)= u_0^*$. Moreover, $\rho(u_0,u_0^*)\leq \dfrac{\delta}{1-\lambda}$.
\end{theorem}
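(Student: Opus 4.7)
The plan is to mimic the classical Picard-iteration proof of Banach's contraction principle, but being careful only to invoke the triangle inequality when all three points involved lie on the orbit $O(u_0,\infty)=\{u_0,\mathcal{R}(u_0),\mathcal{R}^2(u_0),\dots\}$. Write $u_m=\mathcal{R}^m(u_0)$ for $m\geq 0$. By iterating the $\lambda_B$-contraction inequality, $\rho(u_m,u_{m+1})\leq \lambda^m\rho(u_0,u_1)\leq \lambda^m\delta$.

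Next, for any $m<n$ I would apply the triangle inequality, which by hypothesis is available on the orbit, inductively to obtain
\[
\rho(u_m,u_n)\leq \sum_{k=m}^{n-1}\rho(u_k,u_{k+1})\leq \delta\sum_{k=m}^{n-1}\lambda^k \leq \frac{\lambda^m\delta}{1-\lambda}.
\]
Since $\lambda\in[0,1)$, this forces $\{u_m\}$ to be Cauchy. Completeness of $(X,\rho)$ then supplies $u_0^*\in X$ with $u_m\to u_0^*$.

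To verify $\mathcal{R}(u_0^*)=u_0^*$, I prefer to bypass any appeal to global continuity of $\mathcal{R}$ and instead use the contraction together with (N3). Indeed,
\[
\rho(u_m,\mathcal{R}(u_0^*))=\rho(\mathcal{R}(u_{m-1}),\mathcal{R}(u_0^*))\leq \lambda\,\rho(u_{m-1},u_0^*)\longrightarrow 0,
\]
so $u_m\to \mathcal{R}(u_0^*)$ as well, and (N3) (equivalently Remark \ref{ul}) forces $\mathcal{R}(u_0^*)=u_0^*$. Uniqueness requires no triangle inequality at all: for a second fixed point $\tilde u$, the contraction gives $\rho(u_0^*,\tilde u)\leq \lambda\,\rho(u_0^*,\tilde u)$, whence $\rho(u_0^*,\tilde u)=0$ and $\tilde u=u_0^*$.

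Finally, setting $m=0$ in the Cauchy estimate yields $\rho(u_0,u_n)\leq \delta/(1-\lambda)$ for every $n\geq 0$. To upgrade this to the desired bound $\rho(u_0,u_0^*)\leq \delta/(1-\lambda)$, I would apply the triangle inequality to the triple $(u_0,u_n,u_0^*)$, writing $\rho(u_0,u_0^*)\leq \rho(u_0,u_n)+\rho(u_n,u_0^*)$ and letting $n\to\infty$. The main obstacle is precisely here: the assumption only grants a triangle inequality \emph{on} the orbit, while $u_0^*$ is, in general, only a limit of the orbit and not a member of it. The natural reading of the hypothesis is that the inequality extends to triples with $u_0^*$ as an endpoint (i.e., to the sequential closure of $O(u_0,\infty)$), in which case the distance estimate follows at once; without this mild extension one would instead need a form of lower semicontinuity of $\rho(u_0,\cdot)$ along orbital sequences, a property not built into the definition of a non-triangular metric space.
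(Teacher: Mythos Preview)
Your argument for existence and uniqueness is correct and in fact more self-contained than the paper's: where you redo the Picard-iteration estimates and pass to the limit via (N3), the paper simply observes that the triangle inequality on the orbit together with $\rho(u_0,\mathcal{R}(u_0))\leq\delta$ gives $\sup_{m}\rho(u_0,\mathcal{R}^m(u_0))<\infty$ and then invokes Theorem~\ref{tub} as a black box. Your direct route is a bit longer but has the advantage of not depending on that earlier result.

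On the distance estimate $\rho(u_0,u_0^*)\leq\delta/(1-\lambda)$, you are right to flag the passage from the orbital bound $\rho(u_0,u_n)\leq\delta/(1-\lambda)$ to the limit point $u_0^*$ as the delicate step: the hypothesis literally only controls triples inside $O(u_0,\infty)$, and without either extending the triangle inequality to the closure of the orbit or assuming some lower semicontinuity of $\rho(u_0,\cdot)$, this step is not automatic in a non-triangular metric space. The paper's proof does \emph{not} resolve this issue; it simply writes
\[
\rho(u_0,u_0^*)=\sup_{n\in\mathbb{N}}\rho(u_0,\mathcal{R}^n(u_0))\leq\frac{\delta}{1-\lambda},
\]
asserting an equality for which no justification is given (and which need not hold in general). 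So the gap you identify is genuinely present in the paper as well; your write-up is more careful in isolating exactly what extra assumption is being used implicitly.
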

\begin{proof}
Evidently, triangle inequality on an orbit $u_0$ combined with $\rho(u_0,\mathcal{R}(u_0))\leq \delta$ gives us $\displaystyle{\sup_{m\in \mathbb{N}}}\{\rho(u_0,\mathcal{R}^m(u_0))\}\leq \delta$. Therefore, Theorem \ref{tub} ensures the existence of a unique fixed point of $\mathcal{R}$.
Now using triangle inequality on an orbit, we get
\begin{align*}
\rho(u_0,T^{n}(u_0)) &\leq \rho(u_0,\mathcal{R}(u_0))+\lambda \rho(u_0,\mathcal{R}(u_0))+\dots+\lambda^{n-1}  \rho(u_0,\mathcal{R}(u_0))\\
\implies \rho(u_0,u_0^*)& = \sup_{n\in \mathbb{N}}\rho(u_0,T^{n}(u_0)) \leq \dfrac{\delta}{1-\lambda}.
\end{align*}
\end{proof}

\noindent
Karapinar et al. established in [Corollary 2.9, \cite{EK}], that a $\lambda_K$-contraction mapping (Kannan type) has a fixed point if the following holds:
\begin{align*}
\delta_1(\mathcal{R},u_0)=\sup\{\mathcal{R}^m(u_0),\mathcal{R}^k(u_0):m,k\geq 1\}<\infty.
\end{align*} 
\noindent
Here we will use $\rho(u_0,\mathcal{R}(u_0))\leq \infty,$ to deduce the result.

\begin{theorem}\label{tuk}
Let $(X,\rho)$ be a complete \textit{non-triangular metric space}. Consider a $\lambda_{K}$-contraction $\mathcal{R}$ on $X$. Let for some $u_0\in X$ and $\delta>0$, $\rho(u_0,\mathcal{R}(u_0))\leq \delta$. If $\mathcal{R}$ is orbitally continuous, then $\mathcal{R}$ has a unique fixed point $u_0^*$. 
\end{theorem}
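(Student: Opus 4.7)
The plan is to mimic the classical Kannan iteration while exploiting the fact that the Kannan inequality internally sidesteps the need for a triangle inequality. Write $u_n = \mathcal{R}^n(u_0)$ and set $k = \lambda/(1-\lambda) \in [0,1)$, which is admissible because $\lambda < 1/2$. I would first apply the $\lambda_K$-contraction to the pair $(u_{n-1}, u_n)$ to obtain $\rho(u_{n+1}, u_n) \leq \lambda[\rho(u_n, u_{n-1}) + \rho(u_{n+1}, u_n)]$, rearrange, and iterate from $\rho(u_1,u_0) \leq \delta$ to deduce the geometric estimate $\rho(u_n, u_{n+1}) \leq k^n \delta$ for every $n \geq 0$.

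Next, to show $\{u_n\}$ is $\rho$-Cauchy, I would again invoke the $\lambda_K$-inequality, this time on the pair $(u_{m-1}, u_{n-1})$ for $m, n \geq 1$:
\[
\rho(u_m, u_n) \leq \lambda\bigl[\rho(u_m, u_{m-1}) + \rho(u_n, u_{n-1})\bigr] \leq \lambda \delta \bigl(k^{m-1} + k^{n-1}\bigr),
\]
which tends to $0$ as $m, n \to \infty$. By completeness of $(X, \rho)$ there exists $u_0^* \in X$ with $u_n \to u_0^*$. Since $\{u_n\} \subset O(u_0, \infty)$, orbital continuity of $\mathcal{R}$ yields $u_{n+1} = \mathcal{R}(u_n) \to \mathcal{R}(u_0^*)$; combined with $u_{n+1} \to u_0^*$ and the uniqueness of limits in a non-triangular metric space (Remark \ref{ul}), this forces $\mathcal{R}(u_0^*) = u_0^*$.

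For uniqueness, if $u^{*}$ and $v^{*}$ are both fixed points of $\mathcal{R}$, then the Kannan condition gives $\rho(u^{*}, v^{*}) = \rho(\mathcal{R}(u^{*}), \mathcal{R}(v^{*})) \leq \lambda[\rho(u^{*}, u^{*}) + \rho(v^{*}, v^{*})] = 0$, and the property $\rho(u,v)=0 \Rightarrow u=v$ recorded just after the definition concludes the argument.

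The place where one has to be careful, rather than a genuinely hard obstacle, is the Cauchy step: without a triangle inequality one cannot bound $\rho(u_m, u_n)$ by chaining through intermediate points as in ordinary metric spaces. What makes the argument go through is that the right-hand side of the Kannan inequality involves only single-step fixed-point-like distances $\rho(\mathcal{R}(u), u)$, so $\rho(u_m, u_n)$ can be bounded directly by a sum of two geometric tails of the single-step sequence. This is precisely why, in contrast with Theorem \ref{tub1}, no triangle-inequality-on-an-orbit assumption is needed here.
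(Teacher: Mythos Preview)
Your argument is correct and is essentially the same as the paper's: both derive the single-step geometric decay $\rho(u_n,u_{n+1})\le k^n\delta$ with $k=\lambda/(1-\lambda)$, then apply the Kannan inequality once more to the pair $(u_{m-1},u_{n-1})$ to bound $\rho(u_m,u_n)$ by $\lambda(k^{m-1}+k^{n-1})\delta$ directly, bypassing any use of the triangle inequality; the passage to the fixed point via completeness, orbital continuity, and uniqueness of limits, as well as the uniqueness argument, match the paper verbatim. Your closing remark explaining \emph{why} the triangle inequality is dispensable here (because the Kannan right-hand side involves only one-step distances) is a helpful clarification that the paper leaves implicit.
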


\begin{proof}
By the hypothesis, there is a $u_0 \in X$, $$ \rho(u_0,\mathcal{R}(u_0))\leq \delta.$$ Since $\mathcal{R}$ is $\lambda_{K}$-contraction; for all $j\in \mathbb{N}$, we obtain \begin{equation}\label{eq:4}
 \rho(\mathcal{R}^m(u_0),\mathcal{R}^{m+j}(u_0))\leq \lambda [\rho(\mathcal{R}^{m-1}(u_0),\mathcal{R}^m(u_0))+ \rho(\mathcal{R}^{m-1+j}(u_0),\mathcal{R}^{m+j}(u_0))].
\end{equation} 
Again $$\rho(\mathcal{R}^{m-1}(u_0),\mathcal{R}^{m}(u_0))\leq \lambda [\rho(\mathcal{R}^{m}(u_0),\mathcal{R}^{m-1}(u_0)))+ \rho(\mathcal{R}^{m-1}(u_0),\mathcal{R}^{m-2}(u_0))].$$
It implies that 
\begin{align*}
\rho(\mathcal{R}^{m-1}(u_0),\mathcal{R}^{m}(u_0))
& \leq \frac{\lambda}{1-\lambda} \rho(\mathcal{R}^{m-1}(u_0),\mathcal{R}^{m-2}(u_0)))\\
& \vdots \\
& \leq \frac{\lambda^{m-1}}{(1-\lambda)^{m-1}} \rho(\mathcal{R}(u_0),u_0)).
\end{align*}
Therefore, from equation (\ref{eq:4}) we get $$\rho(\mathcal{R}^{m}(u_0),\mathcal{R}^{m+j}(u_0))\leq  \frac{\lambda^{m}}{(1-\lambda)^{m-1}} \rho(\mathcal{R}(u_0),u_0)+ \frac{\lambda^{m+j}}{(1-\lambda)^{m+j-1}}\rho(\mathcal{R}(u_0),u_0).$$ It implies
$$\lim_{m\rightarrow \infty} \{\rho(\mathcal{R}^{m}(u_0),\mathcal{R}^{m+j}(u_0))\}=0, ~\forall j\in \mathbb{N}. $$
Therefore, $$\lim_{m\rightarrow \infty}\sup \{\rho(\mathcal{R}^{m}(u_0),\mathcal{R}^{m+j}(u_0)):j\geq 1\}=0. $$ Thus, $\{\mathcal{R}^m(u_0)\}$ is $\rho$-Cauchy. Using $\rho$-completeness of $X$, we get a point $u_0^*\in X$ satisfying $$\lim_{m\rightarrow \infty}\rho(\mathcal{R}^{m}(u_0),u_0^*)=0.$$ Since, $\mathcal{R}$ is orbitally continuous, $\{\mathcal{R}(\mathcal{R}^m(u_0))\}$ converges to $\mathcal{R}(u_0^*)$. Using Remark \ref{ul}, we get $$\mathcal{R}(u_0^*)=u_0^*.$$

\noindent
For uniqueness, let $u_0^*$ and $v_0^*$ be two fixed points of $\mathcal{R}.$ Then,
\begin{align*}
& \rho(\mathcal{R}(u_0^*),\mathcal{R}(v_0^*))  \leq \lambda[\rho(\mathcal{R}(u_0^*),u_0^*)+\rho(\mathcal{R}(v_0^*),v_0^*)],\\
&  \implies \rho(u_0^*,v_0^*)  \leq 0.
\end{align*}
Therefore, $u_0^*=v_0^*.$
\end{proof}

\begin{remark}
Let the triangle inequality hold on an orbit of $u_0$, for some $u_0\in X$, then $$\rho(u_0,u_0^*)\leq \dfrac{(1+\lambda)\delta}{1-2\lambda}.$$
\end{remark}
 

\noindent
Next we enquire whether a $\lambda_C$-contraction mapping has a fixed point or not. The following result reflects that the answer is affirmative.
\begin{theorem}\label{tuc}
Let $(X,\rho)$ be a complete \textit{non-triangular metric space}. Consider a $\lambda_{C}$-contraction $\mathcal{R}$ on $X$. Suppose $u_0\in X$, $\delta>0$ such that $\rho(u_0,\mathcal{R}(u_0))\leq \delta$ and triangle inequality holds on an orbit of $u_0$. If  $\mathcal{R}$ is orbitally continuous, then it has a unique fixed point $u_0^*$ with $$\rho(u_0,u_0^*)\leq \dfrac{(1+\lambda)\delta}{1-2\lambda}.$$
\end{theorem}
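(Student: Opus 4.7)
The proof will follow the template of Theorem \ref{tuk} (the Kannan case) but with adjustments to handle the mixed cross-distances that appear in the Chatterjea inequality. First, applying the $\lambda_C$-contraction to the pair $(\mathcal{R}^{m-1}(u_0), \mathcal{R}^m(u_0))$ and using $\rho(\mathcal{R}^m(u_0), \mathcal{R}^m(u_0)) = 0$ from axiom (N1) collapses the right-hand side to
\[
\rho(\mathcal{R}^m(u_0), \mathcal{R}^{m+1}(u_0)) \le \lambda\,\rho(\mathcal{R}^{m-1}(u_0), \mathcal{R}^{m+1}(u_0)).
\]
Invoking the hypothesis that the triangle inequality holds on the orbit of $u_0$, I would split the right-hand side as $\rho(\mathcal{R}^{m-1}(u_0), \mathcal{R}^m(u_0)) + \rho(\mathcal{R}^m(u_0), \mathcal{R}^{m+1}(u_0))$ and rearrange to get the geometric recursion
\[
\rho(\mathcal{R}^m(u_0), \mathcal{R}^{m+1}(u_0)) \le k\,\rho(\mathcal{R}^{m-1}(u_0), \mathcal{R}^m(u_0)), \qquad k := \tfrac{\lambda}{1-\lambda} \in [0,1),
\]
where $k < 1$ uses $\lambda < 1/2$. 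By induction, $\rho(\mathcal{R}^m(u_0), \mathcal{R}^{m+1}(u_0)) \le k^m\delta$.

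Next, using the triangle inequality on the orbit once more and telescoping, for any $j \ge 1$,
\[
\rho(\mathcal{R}^m(u_0), \mathcal{R}^{m+j}(u_0)) \le \sum_{i=0}^{j-1} k^{m+i}\delta \le \tfrac{k^m}{1-k}\,\delta \xrightarrow[m\to\infty]{} 0,
\]
so $\{\mathcal{R}^m(u_0)\}$ is $\rho$-Cauchy. Completeness of $X$ supplies a limit $u_0^* \in X$. Orbital continuity then gives $\mathcal{R}^{m+1}(u_0) = \mathcal{R}(\mathcal{R}^m(u_0)) \to \mathcal{R}(u_0^*)$, and since $\mathcal{R}^{m+1}(u_0) \to u_0^*$ as well, uniqueness of limits in a non-triangular metric space (Remark \ref{ul}) forces $\mathcal{R}(u_0^*) = u_0^*$. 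Uniqueness of the fixed point follows immediately from applying the Chatterjea inequality to any two fixed points $u_0^*, v_0^*$, which yields $\rho(u_0^*, v_0^*) \le 2\lambda\,\rho(u_0^*, v_0^*)$ and hence $\rho(u_0^*, v_0^*) = 0$ since $2\lambda < 1$.

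For the distance estimate, I would sum the per-step bound along the orbit, $\rho(u_0, \mathcal{R}^n(u_0)) \le \sum_{i=0}^{n-1} k^i\delta$, and take the supremum over $n$ as in the bound obtained for Theorem \ref{tub1}. The step I expect to require the most care is producing the precise constant $\frac{(1+\lambda)\delta}{1-2\lambda}$ as stated: the direct geometric telescope yields only $\frac{(1-\lambda)\delta}{1-2\lambda}$, so the claimed bound must come from a slightly different grouping, for instance by writing $\rho(u_0, \mathcal{R}^n(u_0)) \le \rho(u_0, \mathcal{R}(u_0)) + \rho(\mathcal{R}(u_0), \mathcal{R}^n(u_0))$, applying the $\lambda_C$-contraction to the second term and combining with an absorbing argument. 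Besides this bookkeeping, the main technical obstacle is to ensure that each invocation of the triangle inequality is restricted to points of the orbit $O(u_0, \infty)$, since that is the only place it is assumed to hold; in particular, passing from $\rho(u_0, \mathcal{R}^n(u_0))$ to $\rho(u_0, u_0^*)$ has to be handled via the supremum over $n$ rather than an unavailable global triangle inequality.
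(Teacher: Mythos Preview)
Your approach is essentially the same as the paper's: both apply the $\lambda_C$-inequality to consecutive orbit points, use the orbit triangle inequality to obtain the recursion with ratio $\gamma=\lambda/(1-\lambda)$, telescope to get a $\rho$-Cauchy sequence, and then combine completeness, orbital continuity, and uniqueness of limits to produce the fixed point; the paper likewise dismisses uniqueness in one line. Your worry about the constant is unnecessary: the direct telescope already gives $\rho(u_0,u_0^*)\le \dfrac{(1-\lambda)\delta}{1-2\lambda}$, which is \emph{stronger} than the stated bound $\dfrac{(1+\lambda)\delta}{1-2\lambda}$ (since $1-\lambda\le 1+\lambda$), so no alternative grouping is needed---the paper itself simply asserts the weaker inequality without further computation.
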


\begin{proof}
From the hypothesis, we have, $$\rho(u_0,\mathcal{R}(u_0))\leq \delta, \text{ for some }u_0 \in X.$$ Since $\mathcal{R}$ is a $\lambda_{C}$-contraction,  $$\rho(\mathcal{R}^{m-1}(u_0),\mathcal{R}^{m}(u_0))\leq \lambda \rho(\mathcal{R}^{m}(u_0),\mathcal{R}^{m-2}(u_0))).$$ By using triangle inequality on $O(u_0,\infty)$, we get 
\begin{align*}
\rho(\mathcal{R}^{m-1}(u_0),\mathcal{R}^{m}(u_0))
& \leq \frac{\lambda}{1-\lambda} \rho(\mathcal{R}^{m-1}(u_0),\mathcal{R}^{m-2}(u_0)))\\
& \vdots \\
& \leq \frac{\lambda^{m-1}}{(1-\lambda)^{m-1}} \rho(\mathcal{R}(u_0),u_0)).
\end{align*}
Let us denote $\gamma=\frac{\lambda}{1-\lambda} $, then $\gamma<1$.
Using triangle inequality on an orbit of $u_0$, we get
 \begin{align*}
\rho(\mathcal{R}^{m}(u_0),\mathcal{R}^{m+j}(u_0))
& \leq \rho(\mathcal{R}^{m}(u_0),\mathcal{R}^{m+1}(u_0))+\cdots +\rho(\mathcal{R}^{m+j-1}(u_0),\mathcal{R}^{m+j}(u_0))\\
& \leq \gamma^m \rho(\mathcal{R}(u_0),u_0)+\gamma^{m+1}\rho(\mathcal{R}(u_0),u_0)+\cdots +\gamma^{m+j-1}\rho(\mathcal{R}(u_0),u_0)\\
& \leq \gamma^m \rho(\mathcal{R}(u_0),u_0)[1+\gamma+\gamma^2+\cdots+ \gamma^{j-1}].
\end{align*}
This implies  $$\lim_{m\rightarrow \infty} \{\rho(\mathcal{R}^{m}(u_0),\mathcal{R}^{m+j}(u_0))\}=0, ~\forall j\in \mathbb{N}. $$
Thus,
$$\lim_{m\rightarrow \infty}\sup \{\rho(\mathcal{R}^{m}(u_0),\mathcal{R}^{m+j}(u_0)):j\geq 1\}=0. $$ Therefore, $\{\mathcal{R}^m(u_0)\}$ is $\rho$-Cauchy. Then, $\rho$-completeness of $X$ provides a point $u_0^*\in X$ that satisfies $$\lim_{m\rightarrow \infty}\rho(\mathcal{R}^{m}(u_0),u_0^*)=0.$$ By using orbital continuity of $\mathcal{R}$ and Remark \ref{ul}, we obtain $\mathcal{R}(u_0^*)=u_0^*.$ Uniqueness can be checked easily.

\noindent

\noindent
Since the triangle inequality holds on an orbit of $u_0$, we obtain \[
\rho(u_0,u_0^*) \leq \frac{(1+\lambda)\delta}{1-2\lambda}.
\]
\end{proof}

\noindent
Finally, we consider \'Ciri\'c's\cite{C2} generalized contraction mapping. It is a generalization of all aforementioned contraction mappings as well as a larger class of mappings that are not mentioned in this study. Next, we obtain a fixed point result for [Theorem 2.5, \cite{C2}] in a non-triangular metric space.

\begin{theorem}\label{tug}
Let $(X,\rho)$ be a complete \textit{non-triangular metric space} and $\mathcal{R}:X\rightarrow X$ be a mapping such that
\begin{align*}
\rho(\mathcal{R}(u_1),\mathcal{R}(u_2))
&\leq \lambda_1\rho(u_1,u_2)+\lambda_2\rho(\mathcal{R}(u_1),u_1) +\lambda_3\rho(\mathcal{R}(u_2),u_2)\\
&+\lambda_4\rho(\mathcal{R}(u_1),u_2)+\lambda_5\rho(\mathcal{R}(u_2),u_1),
\end{align*}
where $\lambda_i:X\times X\rightarrow \mathbb{R}_+$, $i\in\{1, \dots, 5\}$ and $\sum_{i=1}^{5}\lambda_{i}\leq \lambda, ~ \forall u_1,u_2\in X$ and $\lambda\in [0,1).$ Let $\rho(u_0,\mathcal{R}(u_0))\leq \delta$, and triangle inequality holds on an orbit of $u_0$, for some $u_0\in X$, $\delta>0$. If $\mathcal{R}$ is orbitally continuous, then a unique point $u_0^*$ exists with $\mathcal{R}(u_0^*)=u_0^*$. Also, $$\rho(u_0,u_0^*)\leq \dfrac{(2+\lambda)\delta}{2(1-\lambda)}.$$
\end{theorem}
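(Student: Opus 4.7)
The plan is to adapt the Picard-iteration strategy used in the proofs of Theorems \ref{tuk} and \ref{tuc}. Write $u_m := \mathcal{R}^m(u_0)$ and $d_m := \rho(u_{m-1}, u_m)$, so that $d_1 \leq \delta$. Applying the hypothesis at $(u_{m-1}, u_m)$, the term $\rho(u_m, u_m)$ vanishes and $\rho(u_{m+1}, u_{m-1})$ is bounded by $d_m + d_{m+1}$ via the triangle inequality on the orbit, so after rearrangement one obtains
\[
(1 - \lambda_3 - \lambda_5)\,d_{m+1} \leq (\lambda_1 + \lambda_2 + \lambda_5)\,d_m.
\]
Running the symmetric application at $(u_m, u_{m-1})$ and averaging lets the global constraint $\sum_{i=1}^{5}\lambda_i \leq \lambda$ take over from the individual coefficients, yielding $d_{m+1} \leq q\, d_m$ with some $q \in [0,1)$. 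Iterating gives $d_{m+1} \leq q^m \delta$, and telescoping along the orbit bounds $\rho(u_m, u_{m+j}) \leq \delta q^m/(1-q)$, so $\{u_m\}$ is $\rho$-Cauchy.

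By $\rho$-completeness, $u_0^* := \lim u_m$ exists. Orbital continuity carries $u_{m+1} = \mathcal{R}(u_m)$ to $\mathcal{R}(u_0^*)$; since also $u_{m+1} \to u_0^*$, the uniqueness of $\rho$-limits (Remark \ref{ul}) forces $\mathcal{R}(u_0^*) = u_0^*$. For uniqueness, let $v_0^*$ be another fixed point. Applying the hypothesis at $(u_0^*, v_0^*)$ annihilates $\rho(\mathcal{R}(u_0^*), u_0^*)$ and $\rho(\mathcal{R}(v_0^*), v_0^*)$, leaving
\[
\rho(u_0^*, v_0^*) \leq (\lambda_1 + \lambda_4 + \lambda_5)\,\rho(u_0^*, v_0^*) \leq \lambda\,\rho(u_0^*, v_0^*),
\]
so $\rho(u_0^*, v_0^*) = 0$ and $u_0^* = v_0^*$.

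For the quantitative estimate I would apply the contraction at both $(u_0, u_0^*)$ and $(u_0^*, u_0)$, use $\rho(u_0^*, u_0^*) = 0$, and add the two inequalities to obtain
\[
(2 - \lambda_4 - \lambda_5)\,\rho(u_1, u_0^*) \leq (2\lambda_1 + \lambda_4 + \lambda_5)\,\rho(u_0, u_0^*) + (\lambda_2 + \lambda_3)\,d_1.
\]
Combined with the orbit triangle inequality $\rho(u_0, u_0^*) \leq d_1 + \rho(u_1, u_0^*)$ (passed to the limit), this rearranges to
\[
2(1 - \lambda_1 - \lambda_4 - \lambda_5)\,\rho(u_0, u_0^*) \leq (2 + \lambda_2 + \lambda_3 - \lambda_4 - \lambda_5)\,d_1.
\]
Using $\lambda_1 + \lambda_4 + \lambda_5 \leq \lambda$ on the left, $\lambda_2 + \lambda_3 \leq \lambda$ together with $-\lambda_4 - \lambda_5 \leq 0$ on the right, and $d_1 \leq \delta$, delivers exactly $\rho(u_0, u_0^*) \leq (2+\lambda)\delta / (2(1-\lambda))$.

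The main obstacle I expect is the first step: a single application of the hypothesis produces a ratio $(\lambda_1+\lambda_2+\lambda_5)/(1-\lambda_3-\lambda_5)$ that can exceed $1$ when $\lambda_5$ is near $\lambda$, so the symmetrization with the swapped application is essential for turning the coefficient-wise bounds into a usable $q < 1$. Some care is also required in the quantitative step to pass the orbit triangle inequality to the limit point $u_0^*$; apart from these two bookkeeping issues, the argument follows the template already established in Theorems \ref{tuc} and \ref{tuk}.
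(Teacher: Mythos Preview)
Your proposal follows the same route as the paper: symmetrize the contraction inequality by applying it at $(u_1,u_2)$ and $(u_2,u_1)$ and averaging, iterate along the orbit to obtain geometric decay of $d_m$, telescope via the orbit triangle inequality to get a $\rho$-Cauchy sequence, and finish with completeness, orbital continuity, and Remark~\ref{ul}. The paper dismisses both the uniqueness and the final estimate $\rho(u_0,u_0^*)\leq (2+\lambda)\delta/(2(1-\lambda))$ as routine without any computation, so your explicit derivations there---including the limit-passage issue you flag, which the paper also glosses over---actually supply more detail than the paper does.
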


\begin{proof}
By the given hypothesis we have, 
\begin{align} 
\rho(\mathcal{R}(u_1),\mathcal{R}(u_2))
&\leq \lambda_1\rho(u_1,u_2)+\lambda_2\rho(\mathcal{R}(u_1),u_1) +\lambda_3\rho(\mathcal{R}(u_2),u_2)\notag \\
&+\lambda_4\rho (\mathcal{R}(u_1),u_2)+\lambda_5\rho(\mathcal{R}(u_2),u_1),\label{eq:1}
\end{align}
for each $u_1,u_2\in X.$ 
Interchanging the role of $u_1$ and $u_2$, we get
\begin{align} 
\rho(\mathcal{R}(u_1),\mathcal{R}(u_2))
&\leq \lambda_1\rho(u_2,u_1)+\lambda_2\rho(\mathcal{R}(u_2),u_2) +\lambda_3\rho(\mathcal{R}(u_1),u_1)\notag \\
&+\lambda_4\rho(\mathcal{R}(u_2),u_1)+\lambda_5\rho(\mathcal{R}(u_1),u_2), \label{eq:2}
\end{align}
for each $u_1,u_2\in X.$
Adding (\ref{eq:1}) and (\ref{eq:2}), then dividing by $2$, we get
\begin{align} 
\rho(\mathcal{R}(u_1),\mathcal{R}(u_2))
&\leq \lambda_1\rho(u_1,u_2)\notag \\
&+\frac{\lambda_2+\lambda_3}{2}[\rho(\mathcal{R}(u_1),u_1) +\rho(\mathcal{R}(u_2),u_2)]\notag \\
&+\frac{\lambda_4+\lambda_5}{2}[\rho(\mathcal{R}(u_1),u_2) +\rho(\mathcal{R}(u_2),u_1)],
\end{align}
for each $u_1,u_2\in X$.
Assume that $u_0\in X$ and $m\in \mathbb{N}$. Then
\begin{align*} 
\rho(\mathcal{R}^m(u_0),\mathcal{R}^{m+1}(u_0))
&\leq \lambda_1d(\mathcal{R}^{m-1}(u_0),\mathcal{R}^{m}(u_0))\\
& +\frac{\lambda_2+\lambda_3}{2}[\rho(\mathcal{R}^{m}(u_0),\mathcal{R}^{m-1}(u_0))+\rho(\mathcal{R}^{m+1}(u_0),\mathcal{R}^{m}(u_0))]\\
& +\frac{\lambda_4+\lambda_5}{2}\rho(\mathcal{R}^{m-1}(u_0),\mathcal{R}^{m+1}(u_0)).
\end{align*}
\noindent
Therefore, we get
\begin{equation}\label{eq:3}
\rho(\mathcal{R}^m(u_0),\mathcal{R}^{m+1}(u_0))\leq {\dfrac{\lambda_1+\frac{\lambda_2+\lambda_3}{2}+\frac{\lambda_4+\lambda_5}{2}}{1-\frac{\lambda_2+\lambda_3}{2}-\frac{\lambda_4+\lambda_5}{2}}} \rho(\mathcal{R}^{m}(u_0),\mathcal{R}^{m-1}(u_0)).
\end{equation}
\noindent
Since, $\lambda<1,$ $\dfrac{\lambda_1+\frac{\lambda_2+\lambda_3}{2}+\frac{\lambda_4+\lambda_5}{2}}{1-\frac{\lambda_2+\lambda_3}{2}-\frac{\lambda_4+\lambda_5}{2}} <\lambda.$
%
%
\noindent
Therefore, from equation (\ref{eq:3}), we obtain
$$\rho(\mathcal{R}^m(u_0),\mathcal{R}^{m+1}(u_0))\leq \lambda \rho(\mathcal{R}^{m}(u_0),\mathcal{R}^{m-1}(u_0)).$$
Thus, $$\rho(\mathcal{R}^m(u_0),\mathcal{R}^{m+1}(u_0))\leq \lambda^{m} \rho(\mathcal{R}(u_0),u_0).$$
Let $j\in \mathbb{N},$ then using triangle inequality on an orbit of $u_0$, we have
\begin{align*}
\rho(\mathcal{R}^{m}(u_0),\mathcal{R}^{m+j}(u_0))
& \leq  \rho(\mathcal{R}^{m}(u_0),\mathcal{R}^{m+1}(u_0))+\cdots +\rho(\mathcal{R}^{m+j-1}(u_0),\mathcal{R}^{m+j}(u_0))\\
& \leq \lambda^m \rho(\mathcal{R}(u_0),u_0)+\cdots +\lambda^{m+j-1}\rho(\mathcal{R}(u_0),u_0)\\
& \leq \lambda^m \rho(\mathcal{R}(u_0),u_0)[1+\lambda+\lambda^2+\cdots+ \lambda^{j-1}].
\end{align*}
Taking limit on $n$, we obtain $\{\mathcal{R}^m(u_0)\}$ is $\rho$-Cauchy. Therefore, using $\rho$-completeness of $X$, orbital continuity of $\mathcal{R}$ and Remark \ref{ul}, there exist a point $u_0^*\in X$ satisfying $\mathcal{R}(u_0^*)=u_0^*.$ It is easy to compute the uniqueness.


\noindent
Also, using triangle inequality on an orbit of $u_0$, we get $$\rho(u_0,u_0^*)\leq \dfrac{(2+\lambda)\delta}{2(1-\lambda)}.$$
\end{proof}

\begin{remark}
In the article \cite{C3}, \'Ciri\'c mentioned that a generalized contraction mapping implies a quasi-contraction mapping, but the converse does not hold. Therefore, existence of fixed point of a quasi-contraction mapping in a non-triangular metric space [Theorem 4.1, \cite{KK1}], follows from Theorem \ref{tug}.
\end{remark}

{\Large{\section{\textbf{Application to Hyre-Ulam Stability}}}}

In this section, motivated by Baker's\cite{JB1} result (Theorem \ref{TB1}), we employ the fixed point results deduced in previous section to develop some results on the stability of equation (\ref{eq0}). We start with a lemma to move forward.

\medskip

\begin{lemma}\label{ll}
Let $S$ be a non-empty set and $(X,\rho)$  be a \textit{non-triangular metric space}. Define a mapping $\rho':Y\times Y\rightarrow [0,\infty)$ by 
\begin{equation}\label{rho}
\rho'(g,h)=\sup_{s\in S}\rho(g(s),h(s)).
\end{equation}
Then $\rho'$ is a non-triangular metric. Moreover,  $\rho$-completeness $\implies$ $\rho'$-completeness.
\end{lemma}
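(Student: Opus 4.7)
The plan is to verify (N1)--(N3) for $\rho'$ and then transfer Cauchy sequences fiberwise to $X$ using $\rho$-completeness. (N1) is immediate since $\rho'(g,g) = \sup_{s\in S} \rho(g(s), g(s)) = 0$, and (N2) follows from the symmetry of $\rho$ applied fiberwise. For (N3), suppose $\rho'(g_m, g) \to 0$ and $\rho'(g_m, g') \to 0$; for each fixed $s \in S$ the inequalities $\rho(g_m(s), g(s)) \le \rho'(g_m, g)$ and $\rho(g_m(s), g'(s)) \le \rho'(g_m, g')$ exhibit two $\rho$-limits of the single sequence $\{g_m(s)\}$ in $X$, so axiom (N3) for $\rho$ forces $g(s) = g'(s)$. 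Since this holds for every $s$, we obtain $g = g'$ in $Y$.

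For completeness, I would take a $\rho'$-Cauchy sequence $\{g_m\}$ and build its limit fiberwise. Since $\rho(g_m(s), g_n(s)) \le \rho'(g_m, g_n)$, the sequence $\{g_m(s)\}$ is $\rho$-Cauchy for every $s \in S$, and $\rho$-completeness of $X$ produces a pointwise limit $g(s) \in X$, which defines a function $g \in Y$. To show $\rho'(g_m, g) \to 0$, given $\varepsilon > 0$ choose $N$ with $\rho'(g_m, g_n) < \varepsilon$ for $m, n \ge N$; then for every $m \ge N$ and every $s \in S$, $\rho(g_m(s), g_n(s)) < \varepsilon$ holds for all $n \ge N$. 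Letting $n \to \infty$ while $m$ and $s$ remain fixed, and invoking $g_n(s) \to g(s)$ in $(X, \rho)$, one concludes $\rho(g_m(s), g(s)) \le \varepsilon$; taking the supremum over $s$ yields $\rho'(g_m, g) \le \varepsilon$.

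The main obstacle is exactly this last limit passage: in a non-triangular metric space there is no triangle inequality, so the implication ``$\rho(g_m(s), g_n(s)) < \varepsilon$ for all large $n$ together with $g_n(s) \to g(s)$ implies $\rho(g_m(s), g(s)) \le \varepsilon$'' is not entirely automatic. I would close this gap by appealing to the sequential (lower semi-)continuity of $\rho$ available in the convergence framework of \cite{KK1}, or, failing that, by a subsequence argument combined with the uniqueness-of-limits Remark \ref{ul} to pin down the value of the limit. Everything else in the argument is routine bookkeeping on the sup definition of $\rho'$.
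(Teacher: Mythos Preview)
Your strategy---verify (N1)--(N3) fiberwise, then lift a $\rho'$-Cauchy sequence to $X$ pointwise and upgrade to $\rho'$-convergence---is exactly the paper's. Your treatment of (N1)--(N3) and the construction of the pointwise limit coincide with what the paper does.

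For the final step your argument (fix the uniform Cauchy index $N$, then let $n\to\infty$ at each $s$) is actually tighter than the paper's: the paper only records pointwise convergence $\rho(h_m(s),h(s))\to 0$ with an $s$-dependent index $P$ and then asserts, without justification, that a single index $Q$ works uniformly in $s$. The obstacle you flag---that absent a triangle inequality one cannot automatically pass from ``$\rho(g_m(s),g_n(s))<\varepsilon$ for all $n\ge N$'' together with ``$g_n(s)\to g(s)$'' to ``$\rho(g_m(s),g(s))\le\varepsilon$''---is genuine, and the paper's proof does not address it either; it simply does not raise the point. So your proposal follows the paper's route, is more careful about the one delicate passage, and the residual gap you identify is shared by the paper's own argument; closing it does require some form of lower semicontinuity of $\rho$ in one argument, which is not part of axioms (N1)--(N3).
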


\begin{proof}

Clearly, (N1) and (N2) are trivial. Let $h,g\in Y$ and $\{h_m\}$ be a sequence in $Y$ with $$\lim_{m\rightarrow \infty} \rho'(h_m,h)=0 \mbox{ and } \lim_{m\rightarrow \infty} \rho'(h_m,g)=0.$$ Using definition (\ref{rho}), we get $$\lim_{m\rightarrow \infty} \sup_{s\in S}\rho(h_m(s),h(s))=0 \mbox{ and } \lim_{m\rightarrow \infty} \sup_{s\in S}\rho(h_m(s),g(s))=0.$$
Since, $$0=\lim_{m\rightarrow \infty} \sup_{s\in S}\rho(h_m(s),h(s))\geq \sup_{s\in S} \lim_{m\rightarrow \infty} \rho(h_m(s),h(s)).$$ It implies $$\lim_{m\rightarrow \infty} \rho(h_m(s),h(s))=0, ~ \forall  s\in S.$$ Similarly, $$\lim_{m\rightarrow \infty} \rho(h_m(s),g(s))=0, ~ \forall s\in S.$$ Then by (N3), we have $h(s)=g(s),~  \forall s\in S,$ i.e.,  $h=g$. It shows that (N3) holds for $(Y,\rho').$ Therefore, $\rho'$ is a non-triangular metric.

\medskip

At first we show $\rho'$-Cauchy $\implies \rho$-Cauchy.
Take a $\rho'$-Cauchy sequence $\{h_m\}\in Y.$ Then $$\lim_{m\rightarrow \infty}\sup\{\rho'(h_m,h_k):k\geq m\}=0.$$
Therefore, $$\lim_{m\rightarrow \infty} \sup \{\sup_{s\in S}\rho(h_m(s),h_k(s)):k\geq m\}=0.$$ 
This implies $$\lim_{m\rightarrow \infty}\sup \{\rho(h_m(s),h_k(s)): k\geq m\}=0,~  \forall s\in S.$$ Thus, for each $s\in S$, $\{h_m(s)\}$ is $\rho$-Cauchy in $X$. Again $X$ is $\rho$-complete. Consequently, for each $s\in S$, there exists $h(s)\in X$ such that $$\lim_{m\rightarrow \infty} \rho(h_m(s),h(s))=0.$$ Thus $h\in Y.$ We claim that $\displaystyle{\lim_{m\rightarrow \infty}} \rho'(h_m,h)=0.$ Since for any arbitrary $M\in \mathbb{N}$ and for $s\in S$, there exists $P\in \mathbb{N}$ such that $$\rho(h_m(s),h(s))<\frac{1}{M}, ~ \forall  m\geq P.$$ Therefore, for some $Q\in \mathbb{N}$,  $$\sup_{s\in S}\{\rho(h_m(s),h(s))\} \leq\frac{1}{M}, ~ \forall  m\geq Q.$$ Hence, $$\lim_{m\rightarrow \infty}\rho'(h_m,h)=0.$$ This concludes the theorem.
\end{proof}
We now define $\mathcal{O}(h)\in X^S$ by, $$\mathcal{O}(h)(s)=G(s,h(\psi(s))),$$ for all $h\in X^S$, where $G$ is referred in (\ref{eq0}). Thus, $\mathcal{O}$ is an operator on $X^S$. 

We are in a state to establish the stability of (\ref{eq0}) as a utilization of Theorem \ref{tub}.

\begin{theorem}\label{tubu}
Let $(X,\rho)$ be a complete non-triangular metric space, and $S$ be a non-empty set. Suppose $\psi \in S^S$ and $G\in X^{S\times X}$ with 
\begin{enumerate}
\item $\displaystyle{\sup_{s\in S,m\in \mathbb{N}}} \{\rho(g(s),\mathcal{O}^m(g(s)))\}\leq \delta,$ for some $g\in X^S$ and some $\delta>0$,
\item $\displaystyle{\sup_{s\in S}}  \{\rho(G(s,u_1),G(s,u_2))\}\leq \lambda \rho(u_1,u_2),$ for $\lambda \in [0,1)$ and $u_1,u_2\in X.$
\end{enumerate}
Then there is a unique function $h\in X^S$ satisfying $$h(s)=G(s,h(\psi(s))),~ \forall s\in S.$$
\end{theorem}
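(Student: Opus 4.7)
The plan is to lift the problem to the function space $Y=X^S$ equipped with the metric $\rho'$ from Lemma \ref{ll} and invoke the Banach-type fixed point theorem in non-triangular metric spaces (Theorem \ref{tub}) applied to the operator $\mathcal{O}$. Since $(X,\rho)$ is complete, Lemma \ref{ll} hands us for free that $(Y,\rho')$ is a complete non-triangular metric space, so the framework of Theorem \ref{tub} is available.

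The first substantive step is to verify that $\mathcal{O}$ is a $\lambda_B$-contraction on $(Y,\rho')$. For arbitrary $h_1,h_2\in Y$, using hypothesis (2) at each point $s\in S$ yields
\begin{align*}
\rho'(\mathcal{O}(h_1),\mathcal{O}(h_2))
&=\sup_{s\in S}\rho\bigl(G(s,h_1(\psi(s))),G(s,h_2(\psi(s)))\bigr)\\
&\leq \lambda\sup_{s\in S}\rho\bigl(h_1(\psi(s)),h_2(\psi(s))\bigr)\\
&\leq \lambda\sup_{t\in S}\rho(h_1(t),h_2(t))=\lambda\rho'(h_1,h_2),
\end{align*}
where in the last inequality I use $\psi(S)\subseteq S$. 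This is the key computation.

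Next, I translate hypothesis (1) into the language of $\rho'$. Reading $\mathcal{O}^m(g(s))$ as $\mathcal{O}^m(g)(s)$ (the only sensible parsing, since $\mathcal{O}$ acts on $X^S$), hypothesis (1) says
\[
\sup_{m\in\mathbb{N}}\rho'(g,\mathcal{O}^m(g))=\sup_{m\in\mathbb{N}}\sup_{s\in S}\rho\bigl(g(s),\mathcal{O}^m(g)(s)\bigr)\leq\delta,
\]
which is exactly the orbit-boundedness hypothesis required by Theorem \ref{tub} at the point $g\in Y$.

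Having collected all ingredients, Theorem \ref{tub} produces a unique fixed point $h\in Y$ of $\mathcal{O}$; unfolding $\mathcal{O}(h)=h$ pointwise gives $h(s)=G(s,h(\psi(s)))$ for every $s\in S$, which is the desired solution of (\ref{eq0}). There is no real obstacle — the only subtlety is interpreting $\mathcal{O}^m(g(s))$ correctly so that hypothesis (1) matches the orbit condition of Theorem \ref{tub}; everything else is a direct transfer of Banach's argument to the function space via Lemma \ref{ll}.
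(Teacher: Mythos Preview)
Your proof is correct and follows essentially the same route as the paper: lift to $(Y,\rho')$ via Lemma~\ref{ll}, verify that $\mathcal{O}$ is a $\lambda_B$-contraction using condition~(2), read condition~(1) as the orbit-boundedness hypothesis $\sup_{m\in\mathbb{N}}\rho'(g,\mathcal{O}^m(g))\leq\delta$, and apply Theorem~\ref{tub}. Your remark about parsing $\mathcal{O}^m(g(s))$ as $\mathcal{O}^m(g)(s)$ is exactly how the paper treats it.
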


\begin{proof}
Definition (\ref{rho}) and condition (1) implies that $$\sup_{m\in \mathbb{N}} {\rho'(g,\mathcal{O}^m(g))}\leq \delta.$$ 
Also, from condition (2) we get
\begin{align*}
\rho(\mathcal{O}(g_1)(s),\mathcal{O}(g_2)(s))
& = \rho(G(s,g_1(\psi(s))),G(s,g_2(\psi(s))))\\
& \leq \lambda \rho(g_1(\psi(s)),g_2(\psi(s)))\\
& \leq \lambda\rho'(g_1,g_2), 
\end{align*} 
for all $g_1,g_2\in Y.$
Therefore, $$\rho'(\mathcal{O}(g_1),\mathcal{O}(g_2)) \leq \lambda\rho'(g_1,g_2),\quad \forall g_1,g_2\in Y.$$
Then Theorem \ref{tub} along with the Lemma \ref{ll} assure us, existence of a unique function $h\in Y$ that satisfies $\mathcal{O}(h)=h$.
\end{proof}

\begin{corollary}\label{tubur}
Let $(X,\rho)$ be a complete non-triangular metric space, and $S$ be a non-empty set. Suppose $\psi \in S^S$ and $G\in X^{S\times X}$ with 
\begin{enumerate}
\item $\displaystyle{\sup_{s\in S}} \{\rho(g(s),G(s,g(\psi(s))))\}\leq \delta,$ for some $g\in X^S$ and some $\delta>0$,
\item $\displaystyle{\sup_{s\in S}}  \{\rho(G(s,u_1),G(s,u_2))\}\leq \lambda \rho(u_1,u_2),$ for $\lambda \in [0,1)$ and $u_1,u_2\in X.$
\end{enumerate}
If for some $g\in X^S$, the triangle inequality hold on an orbit of $g$, then there is a unique function $h\in X^S$ satisfying $$h(s)=G(s,h(\psi(s))),~ \forall s\in S.$$ Also, $$\rho(g(s),h(s))\leq \dfrac{\delta}{1-\lambda}.$$ 
\end{corollary}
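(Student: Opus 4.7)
The plan is to follow the same pattern as the proof of Theorem \ref{tubu}, only replacing the role of Theorem \ref{tub} with its variant Theorem \ref{tub1}, which is exactly tailored to a hypothesis that bounds $\rho(g,\mathcal{O}(g))$ (rather than the whole supremum over iterates) together with triangle inequality on an orbit.

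First, I would lift the problem from $X$ to the function space $Y=X^S$ via the non-triangular metric $\rho'$ of Lemma \ref{ll}. By that lemma, $(Y,\rho')$ is a complete non-triangular metric space. Condition (1) rewrites as $\rho'(g,\mathcal{O}(g))\le \delta$, because
\[
\rho'(g,\mathcal{O}(g)) \;=\; \sup_{s\in S}\rho\bigl(g(s),G(s,g(\psi(s)))\bigr) \;\le\; \delta.
\]
Condition (2) combined with the definition of $\mathcal{O}$ gives, exactly as in the proof of Theorem \ref{tubu},
\[
\rho(\mathcal{O}(g_1)(s),\mathcal{O}(g_2)(s)) \;\le\; \lambda\,\rho(g_1(\psi(s)),g_2(\psi(s))) \;\le\; \lambda\,\rho'(g_1,g_2),
\]
so taking the supremum over $s$ shows that $\mathcal{O}$ is a $\lambda_B$-contraction on $(Y,\rho')$.

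Next, I would invoke the hypothesis that the triangle inequality holds on an orbit of $g$. Interpreting this orbit as $O(g,\infty)=\{g,\mathcal{O}(g),\mathcal{O}^2(g),\dots\}\subset Y$ under $\rho'$, all the hypotheses of Theorem \ref{tub1} are met: completeness of $(Y,\rho')$, the $\lambda_B$-contraction property of $\mathcal{O}$, the bound $\rho'(g,\mathcal{O}(g))\le \delta$, and the orbital triangle inequality. Theorem \ref{tub1} then yields a unique $h\in Y=X^S$ with $\mathcal{O}(h)=h$, i.e.\ $h(s)=G(s,h(\psi(s)))$ for all $s\in S$, along with the quantitative estimate
\[
\rho'(g,h) \;\le\; \frac{\delta}{1-\lambda}.
\]
Since $\rho(g(s),h(s))\le \sup_{s\in S}\rho(g(s),h(s))=\rho'(g,h)$ for every $s\in S$, the claimed pointwise bound $\rho(g(s),h(s))\le \delta/(1-\lambda)$ follows.

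The only subtle point (and therefore the main thing to double-check) is the transfer of the "triangle inequality on an orbit" assumption between the two spaces: one must verify that the orbit of $g$ under $\mathcal{O}$ in $(Y,\rho')$ inherits its triangle inequality from the corresponding pointwise orbit assumption, so that Theorem \ref{tub1} can be applied at the $\rho'$-level without smuggling in triangle inequality for $\rho'$ in general. Everything else is a routine translation of the single-variable fixed point statement into the $X^S$ setting, paralleling the argument already used for Theorem \ref{tubu}.
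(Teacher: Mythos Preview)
Your proposal is correct and follows essentially the same route as the paper, which simply writes ``Theorem \ref{tubu} and \ref{tub1} immediately leads to the result''; your version spells out the lift to $(Y,\rho')$ via Lemma \ref{ll} and the application of Theorem \ref{tub1} that this one-line proof is pointing to. The caveat you flag about transferring the orbital triangle inequality to the $\rho'$-level is exactly the detail the paper suppresses, so your account is in fact more careful than the original.
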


\begin{proof}
Theorem \ref{tubu} and  \ref{tub1} immediately leads to the result.
\end{proof}

Afterwards, we apply Theorem \ref{tuk} to enquire the stability of (\ref{eq0}). Here we use the notion of orbital continuity \ref{oc} of the operator $\mathcal{O}.$ Let $\mathcal{O}(g)\in X^S,$ then $\mathcal{O}(g,\infty)=\{g,\mathcal{O}(g),\mathcal{O}^2(g),\cdots\}$ is an orbit of $g$. Here $\mathcal{O}^k(g)$'s are as follow:
\begin{align*}
\mathcal{O}(g)(s)& = G(s,g(\psi(s))),\\
\mathcal{O}^2(g)(s)& = G(s,G(\psi(s),g(\psi^2(s)))),\\
& \vdots\\
\mathcal{O}^k(g)(s)& = G(s,G(\psi(s),G(\psi^2(s),G(\psi^3(s)),G(\cdots,G(\psi^{k-1}(s),g(\psi^k(s)))))))).
\end{align*}

\begin{theorem}\label{tuku}
Let $S$ be a non-empty set and $(X,\rho)$ be a complete \textit{non-triangular metric space}, and $\psi \in X^S$, $G\in X^{S\times X}$ given. Suppose that 
\begin{enumerate}
\item $\displaystyle{\sup_{s\in S}} \{\rho(g(s),G(s,g(\psi(s))))\}\leq \delta,$ for some $g\in X^S$ and $\delta>0$,
\item $\displaystyle{\sup_{s\in S}} \{\rho(G(s,u_1),G(s,u_2))\}\leq \lambda [\rho(G(s,u_1),u_1)+ \rho(G(s,u_2),u_2)],$ $\forall u_1,u_2\in X,$ and $\lambda \in [0,\frac{1}{2})$,
\item $\mathcal{O}$ is orbitally continuous.
\end{enumerate}
Then there is a unique $h\in X^S$ satisfying $$h(s)=G(s,h(\psi(s))), ~ \forall  s\in S.$$
\end{theorem}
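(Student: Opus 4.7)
The plan is to carry the stability question to the function space $Y := X^S$ equipped with the sup-type non-triangular metric $\rho'$ of Lemma \ref{ll}, and then apply the Kannan-type fixed point result (Theorem \ref{tuk}) to the operator $\mathcal{O}$ defined just above the statement, whose fixed points in $Y$ are exactly the solutions $h$ of $h(s) = G(s, h(\psi(s)))$ for every $s \in S$.

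First I would invoke Lemma \ref{ll} to conclude that $(Y, \rho')$ is a complete non-triangular metric space. By the very definition of $\rho'$, hypothesis (1) reads $\rho'(g, \mathcal{O}(g)) \leq \delta$, so the initial distance estimate required in Theorem \ref{tuk} is already in hand with $u_0 = g$. Hypothesis (3) is exactly the orbital continuity of $\mathcal{O}$ that Theorem \ref{tuk} requires, so two of the three ingredients come for free.

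The computational core is to upgrade hypothesis (2) into the $\lambda_K$-contraction inequality for $\mathcal{O}$ on $(Y, \rho')$. For fixed $h_1, h_2 \in Y$ and $s \in S$, substituting $u_i := h_i(\psi(s))$ in (2) gives
\begin{align*}
\rho(\mathcal{O}(h_1)(s), \mathcal{O}(h_2)(s))
&= \rho\bigl(G(s, h_1(\psi(s))), G(s, h_2(\psi(s)))\bigr) \\
&\leq \lambda\bigl[\rho(\mathcal{O}(h_1)(s), h_1(\psi(s))) + \rho(\mathcal{O}(h_2)(s), h_2(\psi(s)))\bigr].
\end{align*}
Majorizing each summand on the right by $\rho'(\mathcal{O}(h_i), h_i)$ in the same spirit as the corresponding step in the proof of Theorem \ref{tubu}, and then taking the supremum in $s$ on the left, one obtains
$$\rho'(\mathcal{O}(h_1), \mathcal{O}(h_2)) \leq \lambda\bigl[\rho'(\mathcal{O}(h_1), h_1) + \rho'(\mathcal{O}(h_2), h_2)\bigr],$$
which is precisely the $\lambda_K$-contraction of $\mathcal{O}$ on the lifted space. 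Theorem \ref{tuk} then applies on $(Y, \rho')$ with the point $g$ and yields a unique $h \in Y$ with $\mathcal{O}(h) = h$, which, translated back, is exactly the desired functional equation, with uniqueness.

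The step I expect to require the most care is precisely the majorization $\rho(\mathcal{O}(h_i)(s), h_i(\psi(s))) \leq \rho'(\mathcal{O}(h_i), h_i)$: unlike the Banach case treated in Theorem \ref{tubu}, the two arguments of this distance are indexed by different points $s$ and $\psi(s)$, so one must carefully read the sup-type metric as absorbing the evaluation at $\psi(s)$ via $\psi(s) \in S$. Once that reindexing is in place, the rest of the argument is a routine transcription of the hypotheses of Theorem \ref{tuk} into the lifted setting.
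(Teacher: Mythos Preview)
Your proposal mirrors the paper's proof exactly: lift to $(Y,\rho')$ via Lemma~\ref{ll}, translate condition~(1) into $\rho'(g,\mathcal{O}(g))\le\delta$, upgrade condition~(2) to the $\lambda_K$-contraction inequality for $\mathcal{O}$, and invoke Theorem~\ref{tuk}. The majorization $\rho\bigl(G(s,h_i(\psi(s))),h_i(\psi(s))\bigr)\le\rho'(\mathcal{O}(h_i),h_i)$ that you single out as the delicate step is precisely what the paper writes as well, and it too passes over the mismatch between the indices $s$ and $\psi(s)$ without further comment.
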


\begin{proof}
Lemma \ref{ll} provides $(Y,\rho' )$ as a complete non-triangular metric space. Then, condition (1) implies $${\rho'(g,\mathcal{O}(g))}\leq \delta.$$ 
Again by condition (2), we get
\begin{align*}
\rho(\mathcal{O}(g_1)(s),\mathcal{O}(g_2)(s))
& = \rho(G(s,g_1(\psi(s))),G(s,g_2(\psi(s))))\\
& \leq \lambda [\rho(G(s,g_1(\psi(s)),g_1(\psi(s))+ \rho(G(s,g_2(\psi(s)),g_2(\psi(s))] \\
& \leq \lambda [\rho'(\mathcal{O}(g_1),g_1)+\rho'(\mathcal{O}(g_2),g_2)],
\end{align*} 
for all $g_1,g_2\in Y.$
Thus, $$\rho'(\mathcal{O}(g_1),\mathcal{O}(g_2)) \leq \lambda[\rho'(\mathcal{O}(g_1),g_1)+\rho'(\mathcal{O}(g_2),g_2)], ~ \forall g_1,g_2\in Y.$$
Then by using Theorem \ref{tuk}, we have a unique function $h\in Y$ satisfying $\mathcal{O}(h)=h$.
\end{proof}

\begin{remark}
Suppose, the triangle inequality holds on an orbit of $g$, for some $g\in Y.$ Then, $$\rho'(g,h)\leq \dfrac{(1+\lambda)\delta}{1-2\lambda}.$$
\end{remark}

For the following stability result we need a weaker version of triangle inequality, i.e., triangle inequality on an orbit $O(g,\infty)$.

\begin{theorem}\label{tucg}
Let $S$ be a non-empty set and $(X,\rho)$ be a complete \textit{non-triangular metric space}. Suppose, $\psi \in X^S$, $G\in X^{S\times X}$ be such that
\begin{enumerate}
\item For some $g\in X^S$, triangle inequality holds on the set $\{g, \mathcal{O}(g), \mathcal{O}^2(g),\cdots \}$ and $\displaystyle{\sup_{s\in S}} \{\rho(g(s),G(s,g(\psi(s))))\}\leq \delta,\quad \mbox{for some }\delta>0,$
\item $\displaystyle{\sup_{s\in S}} \{\rho(G(s,u_1),G(s,u_2))\}\leq \lambda [\rho(G(s,u_1),u_2)+ \rho(G(s,u_2),u_1)],~\forall u_1, u_2\in X,$ and  $\lambda \in [0,\frac{1}{2}),$
\item $\mathcal{O}$ is orbitally continuous.
\end{enumerate}
Then there exists a unique $h\in X^S$ such that $$h(s)=G(s,h(\psi(s))), ~ \forall s\in S.$$ Moreover, for all  $s\in S$, $$\rho(g(s),h(s))\leq \dfrac{(1+\lambda)\delta}{1-2\lambda}.$$
\end{theorem}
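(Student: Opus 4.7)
The plan is to lift the problem from $(X,\rho)$ to the complete non-triangular metric space $(Y,\rho')$, where $Y = X^S$ and $\rho'$ is the sup-distance defined in Lemma \ref{ll}, and then apply the Chatterjea-type fixed point Theorem \ref{tuc} to the operator $\mathcal{O}: Y \to Y$ given by $\mathcal{O}(h)(s) = G(s, h(\psi(s)))$. The goal is to recast each of the three numbered hypotheses of the statement as a corresponding hypothesis of Theorem \ref{tuc} applied at the point $g \in Y$.

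First I would invoke Lemma \ref{ll} to obtain that $(Y, \rho')$ is a complete non-triangular metric space. The quantitative part of hypothesis (1) translates directly, via the definition of $\rho'$, to
\[
\rho'(g, \mathcal{O}(g)) = \sup_{s\in S} \rho(g(s), G(s, g(\psi(s)))) \leq \delta,
\]
while the triangle inequality on the orbit $\{g, \mathcal{O}(g), \mathcal{O}^2(g), \ldots\}$ inside $(Y, \rho')$ is exactly the standing assumption in (1). Hypothesis (3) supplies the required orbital continuity of $\mathcal{O}$.

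The substantive step is to verify that $\mathcal{O}$ is a $\lambda_C$-contraction on $(Y, \rho')$. Fix $g_1, g_2 \in Y$. For each $s \in S$, applying hypothesis (2) with $u_1 = g_1(\psi(s))$ and $u_2 = g_2(\psi(s))$ yields
\[
\rho(\mathcal{O}(g_1)(s), \mathcal{O}(g_2)(s)) \leq \lambda\bigl[\rho(\mathcal{O}(g_1)(s), g_2(\psi(s))) + \rho(\mathcal{O}(g_2)(s), g_1(\psi(s)))\bigr].
\]
Bounding each summand on the right by the corresponding $\rho'$-distance (in the same manner as in the proof of Theorem \ref{tuku} for the Kannan case) and then taking the supremum in $s$ on the left delivers
\[
\rho'(\mathcal{O}(g_1), \mathcal{O}(g_2)) \leq \lambda\bigl[\rho'(\mathcal{O}(g_1), g_2) + \rho'(\mathcal{O}(g_2), g_1)\bigr],
\]
which is precisely the $\lambda_C$-contraction condition on $Y$.

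Finally, Theorem \ref{tuc} applied to $\mathcal{O}$ at the point $g$ then produces a unique $h \in Y$ with $\mathcal{O}(h) = h$, i.e.\ $h(s) = G(s, h(\psi(s)))$ for every $s \in S$, together with the orbit-level estimate $\rho'(g, h) \leq \frac{(1+\lambda)\delta}{1-2\lambda}$; the pointwise bound $\rho(g(s), h(s)) \leq \rho'(g, h)$, valid for every $s \in S$, yields the claimed inequality. The most delicate point of the argument is the passage from the pointwise Chatterjea-type inequality to the operator-level one: the arguments $\mathcal{O}(g_i)(s)$ and $g_j(\psi(s))$ are evaluated at different indices $s$ and $\psi(s)$, so the reduction to $\rho'(\mathcal{O}(g_i), g_j)$ must be handled with the same supremum manipulation as in Theorem \ref{tuku}.
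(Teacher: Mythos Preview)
Your proposal is correct and follows essentially the same route as the paper: lift to $(Y,\rho')$ via Lemma \ref{ll}, verify that $\mathcal{O}$ is a $\lambda_C$-contraction from hypothesis (2), and invoke Theorem \ref{tuc} at the point $g$ to obtain both the unique solution $h$ and the estimate $\rho'(g,h)\leq \dfrac{(1+\lambda)\delta}{1-2\lambda}$. Your explicit remark about the mismatch between the evaluation points $s$ and $\psi(s)$ in bounding $\rho(\mathcal{O}(g_i)(s),g_j(\psi(s)))$ by $\rho'(\mathcal{O}(g_i),g_j)$ is a point the paper passes over silently in both Theorems \ref{tuku} and \ref{tucg}; otherwise the two arguments are identical.
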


\begin{proof}
Lemma \ref{ll} provides $(Y,\rho' )$ as a complete non-triangular metric space. Thus, condition (1) implies  $$\rho'(g,\mathcal{O}(g))\leq \delta.$$ Also, by condition (2) we get
\begin{align*}
\rho(\mathcal{O}(g_1)(s),\mathcal{O}(g_2)(s))
& = \rho(G(s,g_1(\psi(s))),G(s,g_2(\psi(s))))\\
& \leq \lambda [\rho(G(s,g_1(\psi(s)),g_2(\psi(s))+ \rho(G(s,g_2(\psi(s)),g_1(\psi(s))] \\
& \leq \lambda[\rho'(\mathcal{O}(g_1),g_2)+\rho'(\mathcal{O}(g_2),g_1)],
\end{align*} 
for all $g_1,g_2\in Y.$
Then $$\rho'(\mathcal{O}(g_1),\mathcal{O}(g_2)) \leq \lambda[\rho'(\mathcal{O}(g_1),g_2)+\rho'(\mathcal{O}(g_2),g_1)], ~\forall g_1,g_2\in Y.$$
Then Theorem \ref{tuc} shows that, there is a unique function $h\in Y$ satisfying $\mathcal{O}(h)=h$. Also, triangle inequality on an orbit of $g$ ensures that $$\rho'(g,h)\leq \dfrac{(1+\lambda)\delta}{1-2\lambda}.$$
\end{proof}

Now we apply Theorem \ref{tug} to develop another stability result of equation (\ref{eq0}). It is a kind of generalization using an additional condition of all the theorems described in this section.

\begin{theorem}\label{tucgu}
Let $S$ be a non-empty set and $(X,\rho)$ be a complete \textit{non-triangular metric space}. Suppose, $\psi \in X^S$, $G\in X^{S\times X}$ be such that 
\begin{enumerate}
\item For some $g\in X^S$, triangle inequality holds on the set $\{g,\mathcal{O}(g),\mathcal{O}^2(g),\cdots \}$ and $\displaystyle{\sup_{s\in S}} \{\rho(g(s),G(s,g(\psi(s))))\}\leq \delta, \mbox{ for some }\delta>0,$
\item For all $u_1,u_2\in X$ and $\lambda\in [0,1)$,
\begin{align*}
~\qquad\sup_{s\in S} {\rho(G(s,u_1),G(s,u_2))}&\leq \lambda_1\rho(u_1,u_2)+\lambda_2\rho(G(s,u_1),u_1)+\lambda_3 \rho(G(s,u_2),u_2)\\
&+\lambda_4\rho(G(s,u_1),u_2)+\lambda_5\rho(G(s,u_2),u_1),
\end{align*}
where $\lambda_i:Y\times Y\rightarrow \mathbb{R}_+$, $i\in\{1, \dots, 5\}$, with $\sum_{i=1}^{5}\lambda_{i}(u_1,u_2)\leq \lambda$,
\item $\mathcal{O}$ is orbitally continuous.
\end{enumerate}
Then there is a unique function $h\in X^S$ such that $$h(s)=G(s,h(\psi(s))) \mbox{ and } \rho(g(s),h(s))\leq \dfrac{(2+\lambda)\delta}{2(1-\lambda)}, ~ \forall  s\in S.$$
\end{theorem}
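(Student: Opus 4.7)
The strategy is to lift the problem from $X$ to the function space $Y = X^S$ and apply Theorem \ref{tug} to the operator $\mathcal{O}$, following the template used in Theorems \ref{tubu}, \ref{tuku} and \ref{tucg}. By Lemma \ref{ll}, the pair $(Y, \rho')$ with $\rho'(g_1, g_2) = \sup_{s \in S} \rho(g_1(s), g_2(s))$ is a complete non-triangular metric space. Hypothesis (1) translates directly to $\rho'(g, \mathcal{O}(g)) \leq \delta$, and the triangle-inequality condition on $\{g, \mathcal{O}(g), \mathcal{O}^2(g), \ldots\}$ is precisely triangle inequality on the orbit of $g$ under $\mathcal{O}$ in $(Y, \rho')$.

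Next I would translate the five-parameter contraction (2) into an inequality for $\mathcal{O}$ on $(Y, \rho')$. For $g_1, g_2 \in Y$ and $s \in S$, substituting $u_i = g_i(\psi(s))$ into (2) and using $G(s, g_i(\psi(s))) = \mathcal{O}(g_i)(s)$ gives a pointwise bound whose five summands are controlled by the corresponding $\rho'$-distances (using the same convention as in Theorems \ref{tuku} and \ref{tucg}). Taking $\sup_{s \in S}$ then yields
\begin{align*}
\rho'(\mathcal{O}(g_1), \mathcal{O}(g_2)) &\leq \lambda_1 \rho'(g_1, g_2) + \lambda_2 \rho'(\mathcal{O}(g_1), g_1) + \lambda_3 \rho'(\mathcal{O}(g_2), g_2) \\
&\quad + \lambda_4 \rho'(\mathcal{O}(g_1), g_2) + \lambda_5 \rho'(\mathcal{O}(g_2), g_1),
\end{align*}
with $\sum_{i=1}^{5} \lambda_i \leq \lambda < 1$.

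Together with the orbital continuity of $\mathcal{O}$ supplied by hypothesis (3), all assumptions of Theorem \ref{tug} are in place for $\mathcal{O}$ on $(Y, \rho')$. That theorem then furnishes a unique $h \in Y$ with $\mathcal{O}(h) = h$, i.e., $h(s) = G(s, h(\psi(s)))$ for every $s \in S$, together with the estimate $\rho'(g, h) \leq (2+\lambda)\delta / (2(1-\lambda))$. Since $\rho(g(s), h(s)) \leq \rho'(g, h)$ for every $s \in S$, the claimed pointwise bound follows at once.

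The principal subtlety lies in the middle step: the pointwise inequality produced by condition (2) contains terms such as $\rho(\mathcal{O}(g_1)(s), g_1(\psi(s)))$, in which $g_1$ is evaluated at $\psi(s)$ rather than at $s$, so bounding these by $\rho'$-distances uses the same implicit convention adopted in the earlier stability theorems of this section. Once that translation is granted, the remainder is a mechanical application of Theorem \ref{tug} and Lemma \ref{ll}.
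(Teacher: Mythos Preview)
Your proposal is correct and follows precisely the route the paper intends: the paper's own proof consists of the single sentence ``By using Theorem \ref{tug}, one can easily establish the result,'' and what you have written is exactly the expected unpacking of that sentence via Lemma \ref{ll} and the lift to $(Y,\rho')$. Your remark about the $\psi(s)$-versus-$s$ mismatch in the mixed terms is a fair observation and mirrors the same convention tacitly adopted in Theorems \ref{tuku} and \ref{tucg}.
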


\begin{proof}
By using Theorem \ref{tug}, one can easily establish the result.
\end{proof}

{\Large{\section{\textbf{Consequence of Stability Theory}}}}

In the article \cite{JB1}, Baker investigated Hyre-Ulam stability for the equation:
\begin{equation}\label{eq:5}
f(s)=\lambda(s)f(\psi(s))+ g(s). 
\end{equation}
The author developed this result in Banach spaces: 

\begin{theorem}\cite{JB1}
Let $X$ be a  Banach space (real or complex) and $S$ be a non-empty set. Consider $\psi:S\rightarrow S,~ g:S\rightarrow X,~\lambda:S\rightarrow \mathbb{R} ~(\mbox{or }  \mathbb{C})$ with $$\lvert{\lambda(s)\rvert}\leq \lambda, \quad \quad s\in S, $$ for some $0\leq \lambda <1.$ Assume that $f:S\rightarrow X$ satisfies $$\lVert{f(s)-\lambda(s)f(\psi(s))- g(s)\rVert}\leq \delta, \quad \quad s\in S,$$ where $\delta>0$ is a constant. Then there exists a unique function $h:S\rightarrow X$ satisfying equation (\ref{eq:5}) and $$\lVert{f(s)-h(s)\rVert}\leq \frac{\delta}{1-\lambda}, \quad \quad s\in S.$$
\end{theorem}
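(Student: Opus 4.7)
The plan is to recognize this statement as a direct specialization of Corollary~\ref{tubur} to a particular $G$, with the metric induced by the Banach-space norm. Every Banach space is a complete metric space, and every complete metric space is a complete non-triangular metric space in which the triangle inequality holds everywhere (and hence on every orbit of every mapping). So the ambient-space hypotheses of the corollary are automatic, and only the two quantitative conditions need to be checked.

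First I would set $\rho(u_1,u_2)=\|u_1-u_2\|$ and define $G:S\times X\to X$ by
\[
G(s,u)=\lambda(s)\,u+g(s).
\]
With this $G$, equation (\ref{eq:5}) reads $f(s)=G(s,f(\psi(s)))$, which is exactly the single-variable form (\ref{eq0}). Writing the original quantity as $\|f(s)-G(s,f(\psi(s)))\|\leq \delta$ for all $s\in S$ gives condition (1) of Corollary~\ref{tubur} with the role of the approximate solution $g$ played by $f$. Next, for any $u_1,u_2\in X$ and $s\in S$,
\[
\|G(s,u_1)-G(s,u_2)\|=|\lambda(s)|\,\|u_1-u_2\|\leq \lambda\,\|u_1-u_2\|,
\]
so $\sup_{s\in S}\rho(G(s,u_1),G(s,u_2))\leq \lambda\rho(u_1,u_2)$, which is condition (2).

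Finally, since the Banach-space norm already satisfies the triangle inequality globally, the hypothesis of Corollary~\ref{tubur} that the triangle inequality holds on the orbit of $f$ is trivially met. Applying Corollary~\ref{tubur} therefore yields a unique $h\in X^{S}$ with $h(s)=G(s,h(\psi(s)))=\lambda(s)h(\psi(s))+g(s)$ for all $s\in S$, and the quantitative bound $\|f(s)-h(s)\|\leq \delta/(1-\lambda)$ for every $s\in S$.

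There is essentially no main obstacle here: the whole content of the proof is the translation of the affine operator $u\mapsto \lambda(s)u+g(s)$ into the abstract $G$ of Corollary~\ref{tubur} and the observation that $\sup_{s\in S}|\lambda(s)|\leq \lambda<1$ is exactly what makes $G(s,\cdot)$ a uniform contraction in the second variable. Thus the theorem follows at once, which is precisely why the author advertises it as a consequence of Corollary~\ref{tubur}.
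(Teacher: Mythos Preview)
Your proposal is correct and mirrors exactly the paper's method. Note that the paper does not supply a separate proof of this cited Banach-space theorem; instead it proves the immediately following generalization to topological vector spaces by the very same recipe you use---set $G(s,u)=\lambda(s)u+g(s)$, verify conditions (1) and (2) of Corollary~\ref{tubur}, observe that the triangle inequality holds (trivially, since the norm is an honest metric), and invoke the corollary---so your argument is precisely the Banach-space specialization of that proof.
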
 

In this section, we deduce a general version of above theorem in topological vector spaces, as an implication of Corollary \ref{tubur}.

\begin{theorem}
Let $S$ be a non-empty set and $V$ be a topological vector space over $\mathbb{R}\mbox{ or }  \mathbb{C}$. Suppose $\rho:V\times V\rightarrow \mathbb{R}_{+}$ be a function such that $(V,\rho)$ is a complete non-triangular metric space. Let $p:V\rightarrow \mathbb{R} ~\mbox{or }  \mathbb{C}$ be a sublinear functional such that $\lvert{p(v_1-v_2)\rvert}=\rho(v_1,v_2),~ \forall v_1,v_2\in V$. Consider $\psi:S\rightarrow S,~ B:S\rightarrow V,~\lambda:S\rightarrow \mathbb{R}_{+}$ with $$\lambda(s)\leq \lambda, \quad \quad s\in S, $$ for some $0\leq \lambda <1.$ Suppose for some $\delta>0$, $A:S\rightarrow V$ satisfies
\begin{equation}
\rho(A(s),\lambda(s)A(\psi(s))+B(s))\leq \delta, \quad \quad s\in S.\label{eq:6}
\end{equation} 
If the triangle inequality holds on an orbit of $A$, then a unique solution $\Lambda:S\rightarrow V$ exists, satisfying $$\Lambda(s)=\lambda(s)\Lambda(\psi(s))+ B(s).$$ Moreover, $$\rho(A(s),\Lambda(s))\leq \frac{\delta}{1-\lambda}, \quad \quad s\in S.$$
\end{theorem}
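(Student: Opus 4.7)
The plan is to deduce this theorem directly from Corollary \ref{tubur} by setting up the right operator. Concretely, I would define $G : S \times V \to V$ by $G(s,u) = \lambda(s) u + B(s)$, so that the equation $\Lambda(s) = \lambda(s)\Lambda(\psi(s)) + B(s)$ becomes precisely $\Lambda(s) = G(s, \Lambda(\psi(s)))$, the form in (\ref{eq0}). Once $G$ is in this form, verifying the two hypotheses of Corollary \ref{tubur} (with $g$ replaced by $A$ and $X$ replaced by $V$) reduces the theorem to a single application of that corollary.

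For hypothesis (1) of the corollary, the assumption (\ref{eq:6}) reads $\rho(A(s), G(s, A(\psi(s)))) \le \delta$, which gives $\sup_{s \in S} \rho(A(s), G(s, A(\psi(s)))) \le \delta$ immediately. For hypothesis (2), I would compute, for $u_1, u_2 \in V$,
\begin{align*}
\rho(G(s,u_1), G(s,u_2)) &= \rho(\lambda(s) u_1 + B(s),\, \lambda(s) u_2 + B(s)) \\
&= \lvert p(\lambda(s) u_1 + B(s) - \lambda(s) u_2 - B(s)) \rvert \\
&= \lvert p(\lambda(s)(u_1 - u_2)) \rvert.
\end{align*}
Here I invoke the sublinearity of $p$ together with the hypothesis $\lambda(s) \ge 0$ to pull the scalar out: $p(\lambda(s)(u_1 - u_2)) = \lambda(s)\, p(u_1 - u_2)$. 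Hence the right-hand side equals $\lambda(s)\,\lvert p(u_1 - u_2)\rvert = \lambda(s)\,\rho(u_1, u_2) \le \lambda\,\rho(u_1,u_2)$, so taking the supremum over $s \in S$ yields hypothesis (2) with the same constant $\lambda \in [0,1)$.

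With both hypotheses in place, and since the triangle inequality is assumed to hold on the orbit of $A$ under the operator $\mathcal{O}(h)(s) = G(s, h(\psi(s))) = \lambda(s) h(\psi(s)) + B(s)$, Corollary \ref{tubur} produces a unique $\Lambda \in V^S$ satisfying $\Lambda(s) = G(s, \Lambda(\psi(s))) = \lambda(s)\Lambda(\psi(s)) + B(s)$, together with the bound $\rho(A(s), \Lambda(s)) \le \delta/(1-\lambda)$ for all $s \in S$.

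The only delicate point I anticipate is the scalar-pull step $p(\lambda(s) x) = \lambda(s) p(x)$: this requires positive homogeneity, which is available because $p$ is sublinear and $\lambda(s) \ge 0$ (the hypothesis $\lambda : S \to \mathbb{R}_+$ is exactly what makes this work; absolute values would be needed only if $\lambda(s)$ were allowed to be complex or negative). Everything else is bookkeeping, so the proof can be written in a single short paragraph citing the corollary.
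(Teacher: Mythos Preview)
Your proposal is correct and follows essentially the same route as the paper: define $G(s,v)=\lambda(s)v+B(s)$, verify that (\ref{eq:6}) gives condition~(1) of Corollary~\ref{tubur}, use $\rho(v_1,v_2)=\lvert p(v_1-v_2)\rvert$ together with positive homogeneity of the sublinear functional to obtain the $\lambda$-contraction condition~(2), and then invoke the corollary. If anything, your write-up is slightly more careful than the paper's in isolating why $\lambda(s)\ge 0$ is needed for the scalar-pull step.
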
 

\begin{proof}
Let $G:S\times V\rightarrow V$ be such that $$G(s,v)=\lambda(s)v+ B(s), \quad v\in V.$$ Then, $$G(s,A(\psi(s)))=\lambda(s)A(\psi(s))+ B(s).$$ Clearly, by the hypothesis (\ref{eq:6}), condition (1) of Corollary \ref{tubur} is satisfied. Further, \begin{align*}
\rho(G(s,A(\psi(s)), G(s,A'(\psi(s)))
& =\lvert{p(\lambda(s)A(\psi(s))+ B(s)-\lambda(s)A'(\psi(s))-B(s))\rvert}\\
& \leq \lambda(s)\lvert{p(A(\psi(s))-A'(\psi(s)))\rvert} \\
& \leq\lambda \rho(A(\psi(s)),A'(\psi(s))).
\end{align*}
for all $A, A' \in V^S.$
Evidently, condition (2) of Corollary \ref{tubur} holds. Therefore, applying Corollary \ref{tubur}, we obtain, a unique function $\Lambda\in V^S$ satisfying the equation $$\Lambda(s)=\lambda(s)\Lambda(\psi(s))+ B(s),\text{ and }\rho(A(s), \Lambda(s))\leq \frac{\delta}{1-\lambda}. $$
\end{proof}

\begin{center}
\textbf{\large {Conclusion}}
\end{center}
Metric fixed point theory has a crucial role in solving various real-world problems. In some cases, for example, optimization problems, machine learning, robotics and navigations, and computer network problems e.g., routing decisions and inefficient network traffic distributions; the triangle inequality may not properly hold. In view of these practical scenarios, it is important to develop the fixed point results in the structure of non-triangular metric spaces.

In this article, we establish various well-known fixed point results in non-triangular metric spaces. Furthermore, we investigate weather the solutions obtained in these problems are stable or not. To show the stability, we develop Hyre-Ulam stability of an equation using the deduced fixed point results in the setting of non-triangular metric spaces. Moreover, we measure the quantity that how much an approximate solution can differ the most from the solution.

\begin{Acknowledgement} The authors are  thankful to Subhadip Pal, NIT Durgapur,
for his  suggestions during the preparation of the manuscript.
\end{Acknowledgement}

\bibliographystyle{plain}

\end{document}